\documentclass[11pt,reqno]{amsart}

\usepackage{amsthm,amsmath,amssymb}
\usepackage[margin=1in]{geometry}
\usepackage[
  colorlinks=true,
  citecolor=green,
  linkcolor=red,
  urlcolor=black
]{hyperref}
\usepackage{xcolor}
\usepackage{microtype}
\hypersetup{
  pdftitle={Simultaneous popular polynomial differences in higher dimensions.},
  pdfsubject={Polynomial recurrence and directional arithmetic regularity}
}

\numberwithin{equation}{section}

\newtheorem{theorem}{Theorem}[section]
\newtheorem{proposition}[theorem]{Proposition}
\newtheorem{lemma}[theorem]{Lemma}

\theoremstyle{remark}
\newtheorem{remark}[theorem]{Remark}

\usepackage[dvipsnames]{xcolor}

\usepackage[color=Orange!50!white, textsize=scriptsize]{todonotes}

\newcommand{\F}{\mathbb F}
\newcommand{\E}{\mathbb E}
\newcommand{\T}{\mathbb T}
\newcommand{\N}{\mathbb N}
\newcommand{\Z}{\mathbb Z}
\newcommand{\Q}{\mathbb Q}
\newcommand{\cP}{\mathcal P}
\newcommand{\cV}{\mathcal V}
\newcommand{\str}{\mathrm{str}}
\newcommand{\psr}{\mathrm{psr}}
\newcommand{\sml}{\mathrm{sml}}
\newcommand{\bx}{\underline{x}}
\newcommand{\bh}{\underline{h}}
\newcommand{\1}{\mathbf 1}

\title{Simultaneous popular polynomial differences in higher dimensions}
\date{}

\author{Guo-Dong Hong}
\address{Department of Mathematics, California Institute of Technology, Pasadena, CA 91125, USA}
\email{ghong@caltech.edu}

\begin{document}

\begin{abstract}
We extend the simultaneous popular polynomial difference theorem
of Conlon, Dong, and the author to polynomial shifts along
prescribed directions in $\mathbb{F}_p^D$.
Let $P_1,\ldots,P_k\in\mathbb{Z}[t]$ be linearly independent
polynomials with zero constant terms, and let
$v_1,\ldots,v_k\in\mathbb{Z}^D\setminus\{0\}$. We show that,
for every $\varepsilon>0$, every sufficiently large prime $p$,
and every $A\subseteq\mathbb{F}_p^D$ of density $\alpha$, there
exists a nonzero $d\in\mathbb{F}_p$ such that
\[
  \mathbb{E}_{\underline{x}\in\mathbb{F}_p^D}
  \mathbf{1}_A(\underline{x})
  \prod_{\substack{1\leq i\leq k\\ \omega_i=1}}
  \mathbf{1}_A\bigl(\underline{x}+P_i(d)v_i\bigr)
  \geq \alpha^{\,1+\sum_{i=1}^k\omega_i}-\varepsilon
\]
simultaneously for every $\omega\in\{0,1\}^k$.
The main new ingredient is a directional arithmetic regularity
decomposition adapted to mixed directional box norms.
\end{abstract}

\maketitle

\section{Introduction}
\label{sec:introduction}

Popular polynomial difference theorems seek a nonzero parameter
for which the normalized count of a prescribed polynomial
configuration is at least its expected value in a random set
of the same density, up to a small error.
A simultaneous version asks for a single parameter that gives
these lower bounds for several polynomial configurations at once.
Conlon, Dong, and the author~\cite{CDH26} established such a result
over finite fields. In this paper, we extend their theorem to
configurations in $\F_p^D$, with polynomial shifts along prescribed
directions.

To state our result, fix $D,k\in\N$, and set $G:=\F_p^D$,
where $p$ is an odd prime.
Let $\mathcal V=\{v_1,\ldots,v_k\}$ be a collection of nonzero
vectors in $\Z^D$, and let $\mathcal P=\{P_1,\ldots,P_k\}$
be a collection of linearly independent polynomials in $\Z[t]$
with zero constant terms. For
$d\in\F_p$ and functions $f_0,\ldots,f_k\colon G\to\mathbb C$, define
\[
  I_{\cP,\cV}(f_0,\ldots,f_k)(d)
  :=
  \E_{\bx\in G}f_0(\bx)
  \prod_{i=1}^k f_i\bigl(\bx+P_i(d)v_i\bigr).
\]
When all the functions are $\1_A$, this average gives the normalized
count of polynomial configurations
$\bx,\bx+P_1(d)v_1,\ldots,\bx+P_k(d)v_k$
contained in $A$, with the shifts determined by the common parameter $d$.
We consider subconfigurations obtained by keeping $\bx$ and
any subset of the shifted points.
Given $\omega=(\omega_1,\ldots,\omega_k)\in\{0,1\}^k$, let
$(\cP,\cV)_\omega$ denote the subcollection of pairs $(P_i,v_i)$ for which
$\omega_i=1$, and set
\[
  I_{(\cP,\cV)_\omega}(f_0,\ldots,f_k)(d)
  :=
  \E_{\bx\in G}f_0(\bx)
  \prod_{\substack{1\leq i\leq k\\ \omega_i=1}}
  f_i\bigl(\bx+P_i(d)v_i\bigr).
\]
Here and throughout, an empty product is understood to be $1$.

With this notation, our main result is as follows.
\begin{theorem}
\label{thm:main}
Let $D,k\in\N$. Let $\mathcal V=\{v_1,\ldots,v_k\}$ be a
collection of nonzero vectors in $\Z^D$, and let
$\mathcal P=\{P_1,\ldots,P_k\}\subset\Z[t]$ be a collection
of linearly independent polynomials with zero constant terms.
For every $\varepsilon>0$, there exists
\(
  p_0=p_0(\varepsilon,\cP,\mathcal{V})
\)
such that the following holds. For every prime $p\geq p_0$ and every
$A\subseteq\F_p^D$ of density $\alpha$, there exists
$d\in\F_p\setminus\{0\}$ such that
\[
  I_{(\cP,\mathcal{V})_\omega}(\1_A,\ldots,\1_A)(d)
  \geq
  \alpha^{\,1+\sum_{i=1}^k\omega_i}-\varepsilon
\]
holds simultaneously for every $\omega=(\omega_1,\dots,\omega_k)\in\{0,1\}^k$.
\end{theorem}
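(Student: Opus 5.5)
We follow the strategy of~\cite{CDH26}: decompose $\1_A$ by an arithmetic regularity lemma, show that the uniform part contributes negligibly to every count $I_{(\cP,\cV)_\omega}$, and then choose $d$ so that all shifts $P_i(d)v_i$ act almost trivially on the structured part. The novelty is that the relevant uniformity norms are directional. For a shift along a fixed direction $v_i$, the natural control is by box norms in mixed directions $v_i$ and those $v_j$ with which it interacts.

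\emph{Step 1 (PET/degree lowering).} For any $\omega$ and any $1$-bounded $f_0,\dots,f_k$, we show that
\[
\E_{d\in\F_p}\bigl|I_{(\cP,\cV)_\omega}(f_0,\dots,f_k)(d)\bigr|^2 \le \min_{i:\omega_i=1}\|f_i\|_{\square_i}^{c}+O_{\cP,\cV}(p^{-c}),
\]
where $\|\cdot\|_{\square_i}$ is a box norm along a bounded list of directions drawn from $\{v_1,\dots,v_k\}$ with bounded multiplicity. The argument uses the van der Corput/PET induction as in~\cite{CDH26}. Linear independence of the $P_i$ ensures that after differencing the final linear forms in $d$ are nondegenerate, so the Weyl/Gauss sum error is $O(p^{-c})$. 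Because the shifts are multiples of fixed vectors, each differencing step produces a derivative along some $v_j$; this is where the mixed directional box norms arise.

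\emph{Step 2 (directional regularity).} We prove a regularity decomposition
\[
\1_A=f_{\str}+f_{\sml}+f_{\psr}.
\]
Here $f_{\str}$ is a function of boundedly many phases $e(\phi_j(\bx)/p)$ that are polynomial (nilsequence-type) along the directions. The term $f_{\sml}$ is small in $L^2$, and $f_{\psr}$ is tiny in every $\|\cdot\|_{\square_i}$, with the usual growth-function hierarchy. We also require $f_{\str}\ge0$, $\E f_{\str}=\alpha$ and $0\le f_{\str}\le1$. We obtain this by an energy increment. If $f$ has large mixed directional box norm, the inverse theorem for such norms (reducing, via the structure of $\Z v_{j_1}+\dots$ as a sublattice, to ordinary Gowers-type inverse theorems on cosets) yields correlation with a structured function. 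We then refine the factor. We expect this step to be the main obstacle: box norms in mixed non-orthogonal, possibly dependent, directions do not directly admit an inverse theorem. The first thing we would try is to change coordinates so that the span of the relevant $v_j$ becomes a coordinate subspace (possible for $p$ large since the $v_j\in\Z^D$). We would then fiber over the complement and apply the $\F_p^m$ box/Gowers inverse results fiberwise. Finally, the fiber-dependent structure is controlled by pigeonholing on a bounded-complexity factor.

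\emph{Step 3 (choosing $d$).} Expand $I_{(\cP,\cV)_\omega}(\1_A,\dots)$ multilinearly. By Step 1 and Cauchy--Schwarz, any term containing $f_{\psr}$ is small on average over $d$, and any term containing $f_{\sml}$ is $O(\|f_{\sml}\|_2)$. Hence outside a set of $d$ of density at most $\varepsilon/2$ (simultaneously for all $2^k$ values of $\omega$), the count equals the main term $I_{(\cP,\cV)_\omega}(f_{\str},\dots,f_{\str})(d)$ up to $\varepsilon/4$. For the structured part, equidistribution of the polynomial orbit $d\mapsto(P_i(d)\phi_j$-phase shifts$)$ on the bounded-dimensional torus shows that a positive proportion, depending only on the complexity, of $d\neq0$ make every shift $f_{\str}(\cdot+P_i(d)v_i)$ within $\varepsilon/8$ of $f_{\str}$ in $L^2$. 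This is a recurrence statement: zero constant terms let $d$ near $0$ in the Bohr sense work. For such $d$ the main term is approximately
\[
\E f_{\str}^{1+|\omega|}\ge(\E f_{\str})^{1+|\omega|}=\alpha^{1+|\omega|}
\]
by Hölder. Since the proportion of good $d$ from recurrence exceeds the exceptional density by the choice of growth function, some $d$ works simultaneously for all $\omega$, with $p_0$ large enough to absorb the $p^{-c}$ errors.
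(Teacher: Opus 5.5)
\textbf{There is a genuine gap in Steps~2--3.} You use a single structured function $f_{\str}$. It has to absorb everything the counting norms detect, and it also has to be almost invariant under every shift $P_i(d)v_i$. In general no such function exists.

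Any norm controlling slot $i$ must detect $\langle v_i\rangle$-invariant functions. Write $\pi_iu(\bx):=\E_{n\in\F_p}u(\bx+nv_i)$. Take $f_i=u$, $f_0=\overline{\pi_iu}$, and all other $f_j=1$. Then $I(d)=\|\pi_iu\|_{\mathcal L^2(G)}^2$ for every $d$. So your Step~1 bound forces $\|\pi_if_{\psr}\|_{\mathcal L^2}$ to be small, and hence $\pi_if_{\str}\approx\pi_i\1_A$ in $\mathcal L^2$ for each $i$.

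Now let $D=k=2$, $v_1=(1,0)$, $v_2=(0,1)$, and $A=\{\bx\colon x_1\in B\}$ with $B\subseteq\F_p$ the nonzero squares. Then $\pi_2\1_A=\1_A$. Since $\pi_2$ commutes with translations along $v_1$ and contracts $\mathcal L^2$, your Step~3 requirement $\|f_{\str}(\cdot+P_1(d)v_1)-f_{\str}\|_{\mathcal L^2}\leq\varepsilon/8$ would give $\|\1_B(\cdot+s)-\1_B\|_{\mathcal L^2(\F_p)}=O(\varepsilon)+o(1)$ with $s=P_1(d)\neq0$. But this norm is about $2^{-1/2}$ for every $s\neq0$.

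In general, the $\langle v_i\rangle$-invariant part of $\1_A$ is an arbitrary function of the transverse variables. It must sit in the structured part for direction $v_i$, yet it need not be almost invariant along the other directions. So the bounded-complexity description in Step~2 cannot hold, and neither can the simultaneous recurrence followed by H\"older in Step~3.

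\textbf{What is missing is a separate structured component for each direction.} The paper takes $f^{(i)}_{\str}=T^{v_i}_{\mu_i*\widetilde\mu_i}\1_A$, a smoothing along $v_i$ only by a bounded-rank Bohr kernel.
\begin{enumerate}
\item This function is almost invariant under $bv_i$ for $b\in B(\Gamma_i,\rho_1)$, however complicated it is transversally.
\item Its complement is measured by $\|\cdot\|_{G,\langle v_i\rangle}$, which is dominated by directional Fourier coefficients (Lemma~\ref{lem:directional-fourier-control}). So the inverse theorem for mixed non-orthogonal box norms that you flag as the main obstacle is never needed.
\item Control of the counts by this norm comes from Kuca's asymptotic formula (Theorem~\ref{thm:kuca}) applied to multiplicative derivatives, not from a bare PET argument.
\item One also leaves $\1_A$ undecomposed in slot $0$. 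Your Step~1 bound gives no control of $f_0$, so the terms with $f_{\psr}$ in slot $0$ that your expansion creates are not covered.
\end{enumerate}

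The price is that the structured main term becomes $\E\1_A\prod_{i\in S}T^{v_i}_{\mu_i*\widetilde\mu_i}\1_A$, whose factors are different functions, so H\"older gives nothing. The paper bounds it below by $\alpha^{|S|+1}$ via a Jensen/entropy convolution inequality (Lemma~\ref{lem:convolution-inequality}). That inequality uses the factorisation $T^{v_i}_{\mu_i*\widetilde\mu_i}=T^{v_i}_{\mu_i}T^{v_i}_{\widetilde\mu_i}$ and fails for unsymmetrised kernels (Remark~\ref{rem:symmetrization}). Without direction-dependent structured parts and this inequality, your outline does not close.
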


The same nonzero parameter $d$ therefore works for every
choice of $\omega$. In particular, it gives the lower
bound $\alpha^{k+1}-\varepsilon$ for the full configuration and the
lower bound $\alpha^2-\varepsilon$ for each of the $k$ two-term
configurations. No linear-independence assumption is imposed on the
vectors $v_1,\ldots,v_k$.

The study of popular polynomial differences has roots in ergodic
theory. Frantzikinakis and Kra~\cite{FK06} proved a Khintchine-type
recurrence theorem for rationally independent polynomials, giving a
lower bound for recurrence along the full polynomial configuration.
In a related direction, Lyall and Magyar~\cite{LM11} investigated
a different strengthening of Khintchine's recurrence theorem,
which they called simultaneous polynomial recurrence. Their result
shows that a single parameter can satisfy the Khintchine-type lower
bound for each of several two-term polynomial configurations.
These results suggest asking whether a single parameter can give
the corresponding lower bounds for the full configuration and all
its subconfigurations simultaneously. Conlon, Dong, and the
author~\cite{CDH26} established this simultaneous conclusion over
$\F_p$. More recently, Shao and Zhang~\cite{SZ26} obtained simultaneous
popular difference results over $\F_p$ for certain polynomial
configurations with linearly dependent shift polynomials.
In a complementary direction, Theorem~\ref{thm:main} extends
the result of~\cite{CDH26} to polynomial shifts along prescribed
directions in $\F_p^D$.

The proof of Theorem~\ref{thm:main} follows the regularity approach
developed in~\cite{CDH26}. In the one-dimensional setting, a generalized von Neumann
estimate derived from Peluse's polynomial Szemer\'edi
theorem~\cite{P19} controls contributions involving a component
that is small in the $U^2$-norm. Tao's arithmetic regularity
decomposition~\cite{Tao14} then reduces the argument for an arbitrary
set to estimating a structured contribution and a small error.

In the multidimensional setting, Kuca's polynomial Szemer\'edi
theorem~\cite{Kuca24} yields a generalized von Neumann estimate in
terms of the mixed directional box norms
$\|\cdot\|_{G,\langle v_i\rangle}$; see
Lemma~\ref{lem:directional-vn}. To use this estimate, we need a
regularity decomposition adapted to each prescribed direction.
Proposition~\ref{prop:directional-regularity} provides these
decompositions and is the main new ingredient in the proof.
The resulting structured components need not coincide.
We construct them using symmetrized Bohr kernels so that the
convolution inequality in Lemma~\ref{lem:convolution-inequality}
gives the required lower bound for their joint contribution.

The remainder of the paper is organized as follows.
Section~\ref{sec:preliminaries} introduces the notation and records
the preliminary estimates. In Section~\ref{sec:simultaneous-proof},
we prove Theorem~\ref{thm:main} using the directional arithmetic
regularity decomposition.

\section{Notation and preliminaries}
\label{sec:preliminaries}

We collect the notation and the directional Fourier and Bohr-set
estimates used in the proof.

\subsection{Conventions}

All averages are normalized.  Thus, for a finite nonempty set $X$,
\[
  \E_{x\in X}f(x):=\frac1{|X|}\sum_{x\in X}f(x),
  \qquad
  \|f\|_{\mathcal L^q(X)}
  :=\left(\E_{x\in X}|f(x)|^q\right)^{1/q}
  \quad(1\leq q<\infty).
\]
We omit an averaging domain when it is evident from the variables.  For
example, $\E f$ denotes the normalized mean of $f$ on its domain.  For a
set $E$, the function $\1_E$ denotes its indicator.
The density of a subset $A\subseteq X$ is
$\E_{x\in X}\1_A(x)=|A|/|X|$. We also put
$\|f\|_{\mathcal L^\infty(X)}:=\max_{x\in X}|f(x)|$.
We say that $f$ is \emph{$1$-bounded} if $|f(x)|\leq1$ for every $x$.

Throughout the paper, $p$ denotes an odd prime. Let $D\in\N$,
and denote $G=\F_p^D$. Linear independence of polynomials in $\Z[t]$
is understood over $\Q$. Integer vectors and polynomials are reduced
modulo $p$ whenever they are used over $\F_p$.
For $v\in G$, write $\langle v\rangle:=\{tv\colon t\in\F_p\}$.
For $a\in\F_p$, write $e_p(a):=\exp(2\pi i a/p)$.
For $f\colon\F_p\to\mathbb C$, our normalized Fourier transform is
\[
  \widehat f(\xi):=\E_{x\in\F_p}f(x)e_p(-\xi x).
\]
For $\bx,\bh\in G$, define the multiplicative derivative
\[
  \triangle_{\bh}f(\bx)
  :=f(\bx)\overline{f(\bx+\bh)}.
\]
We write
$\triangle_{\bh_1,\ldots,\bh_s}f
:=\triangle_{\bh_1}\cdots\triangle_{\bh_s}f$
for the iterated multiplicative derivative, with the operators
applied by composition.

We use Landau's notation $O,o$, and Vinogradov's notation $\ll,\gg$.
A subscript records the data on which the
implicit constant may depend.

\subsection{Directional Fourier coefficients and box norms}

We introduce the Fourier coefficients and box norms used to
control pseudorandomness in each prescribed direction.

For a nonzero $v\in G$, $\xi\in\F_p$, and
$f\colon G\to\mathbb C$, define the Fourier coefficient of $f$ along $v$ by
\begin{equation}
  \widehat f(\bx;v;\xi)
  :=
  \E_{n\in\F_p}f(\bx+nv)e_p(-n\xi).
  \label{eq:directional-fourier}
\end{equation}
If $\bx'=\bx+sv$, then
\[
  \widehat f(\bx';v;\xi)=e_p(s\xi)\widehat f(\bx;v;\xi),
\]
so the modulus of \eqref{eq:directional-fourier} is constant on cosets of
$\langle v\rangle$.

For subspaces $H_1,\ldots,H_s\leq G$, define the Gowers box norm along
$H_1,\ldots,H_s$ by
\[
  \|f\|_{H_1,\ldots,H_s}
  :=
  \left(
    \E_{\bx\in G}
    \E_{\bh_1\in H_1}\cdots\E_{\bh_s\in H_s}
    \triangle_{\bh_1,\ldots,\bh_s}f(\bx)
  \right)^{1/2^s}.
\]
The usual Gowers $U^2$-norm is the special case
$\|f\|_{U^2(G)}:=\|f\|_{G,G}$.
The notation $\|f\|_{G,\langle v\rangle}$ denotes the box norm with
$H_1=G$ and $H_2=\langle v\rangle$.  This is the mixed directional box
norm used in \cite{Hong25}.

The following estimate relates the directional Fourier coefficients
to the box norm appearing in the counting estimate; see
\cite[Lemma~2.2]{Hong25}.

\begin{lemma}
\label{lem:directional-fourier-control}
Let $v\in G\setminus\{0\}$ and $f\colon G\to\mathbb C$.  If $f$ is $1$-bounded, then
\begin{equation}
  \|f\|_{G,\langle v\rangle}^{4}
  \leq
  \sup_{\xi\in\F_p}
  \E_{\bx\in G}|\widehat f(\bx;v;\xi)|^2.
\end{equation}
\end{lemma}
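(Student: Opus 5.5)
We expand the fourth power of the box norm and reduce it to a one-dimensional $U^2$ computation on each coset of $\langle v\rangle$. By definition,
\[
  \|f\|_{G,\langle v\rangle}^4
  =\E_{\bx\in G}\E_{\bh\in G}\E_{m\in\F_p}
  f(\bx)\overline{f(\bx+\bh)}\,\overline{f(\bx+mv)}f(\bx+\bh+mv).
\]
Set $\bx'=\bx+\bh$; as $\bh$ ranges over $G$, so does $\bx'$. The average then factors as
\[
  \E_{m\in\F_p}\bigl|\E_{\bx\in G}f(\bx)\overline{f(\bx+mv)}\bigr|^2 .
\]
We write the inner correlation as an average over cosets of $\langle v\rangle$. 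Replacing $\bx$ by $\bx+nv$ and averaging over $n\in\F_p$ gives
\[
  \E_{\bx\in G}f(\bx)\overline{f(\bx+mv)}
  =\E_{\bx\in G}F_{\bx}(m),
  \qquad
  F_{\bx}(m):=\E_{n\in\F_p}f(\bx+nv)\overline{f(\bx+(n+m)v)}.
\]

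Next, we compute $F_{\bx}$ in Fourier terms. Put $g_{\bx}(n):=f(\bx+nv)$, so that $\widehat{g_{\bx}}(\xi)=\widehat f(\bx;v;\xi)$. Fourier inversion on $\F_p$ gives
\[
  F_{\bx}(m)=\sum_{\xi\in\F_p}|\widehat f(\bx;v;\xi)|^2e_p(-\xi m),
\]
up to the sign convention in the exponential, which does not affect what follows. Averaging over $\bx$,
\[
  \E_{\bx}F_{\bx}(m)=\sum_{\xi}a(\xi)e_p(-\xi m),
  \qquad
  a(\xi):=\E_{\bx\in G}|\widehat f(\bx;v;\xi)|^2\ge0 .
\]
By Parseval in $m$,
\[
  \|f\|_{G,\langle v\rangle}^4=\E_m\Bigl|\sum_\xi a(\xi)e_p(-\xi m)\Bigr|^2=\sum_\xi a(\xi)^2 .
\]

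To finish, we bound $\sum_\xi a(\xi)^2\le(\sup_\xi a(\xi))\sum_\xi a(\xi)$. Parseval on each coset gives
\[
  \sum_\xi a(\xi)=\E_{\bx}\E_n|f(\bx+nv)|^2=\E_{\bx}|f(\bx)|^2\le 1,
\]
since $f$ is $1$-bounded. Hence $\|f\|_{G,\langle v\rangle}^4\le\sup_\xi a(\xi)$, which is the claim.

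The only point needing care is the first factorization: we must check that the variable change $\bx'=\bx+\bh$ really decouples the $G$-average. The expansion also uses the composition convention for $\triangle_{\bh_1,\bh_2}$, including the order of the conjugations. Everything after that is Parseval's identity on $\F_p$. We also note that $v\neq0$ is used so that $n\mapsto\bx+nv$ is a bijection onto the coset; this is what justifies both the coset reparametrization and the final Parseval step.
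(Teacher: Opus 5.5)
Your proof is correct and complete. The paper gives no proof of this lemma and simply cites \cite[Lemma~2.2]{Hong25}. Your fibrewise Fourier computation therefore supplies a self-contained argument. It even establishes the exact identity
\[
  \|f\|_{G,\langle v\rangle}^4=\sum_{\xi\in\F_p}\Bigl(\E_{\bx\in G}|\widehat f(\bx;v;\xi)|^2\Bigr)^2,
\]
from which the lemma follows using only $\|f\|_{\mathcal L^2(G)}\leq 1$ rather than full $1$-boundedness.

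One small correction to your closing remark: the hypothesis $v\neq0$ is not actually needed where you say it is.

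\begin{itemize}
  \item The coset reparametrization is just translation invariance of $\E_{\bx\in G}$.
  \item Parseval holds for the function $n\mapsto f(\bx+nv)$ on $\F_p$ whatever $v$ is.
\end{itemize}

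The only place $v$ enters is the identification of $\E_{\bh_2\in\langle v\rangle}$ with $\E_{m\in\F_p}$ via $\bh_2=mv$. Even this holds for any $v$, since $m\mapsto mv$ is a surjective homomorphism onto $\langle v\rangle$ with fibres of equal size.
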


\subsection{Bohr sets}

Bohr sets describe the shifts under which the structured components
are almost invariant. We record their definition and a standard lower
bound for their size.

For $t\in\T:=\mathbb R/\mathbb Z$, write $\|t\|_{\T}$ for the distance to
the nearest integer.  If $\Gamma\subseteq\F_p\setminus\{0\}$ and
$0<\rho\leq1/2$, define
\[
  B(\Gamma,\rho)
  :=
  \left\{
    b\in\F_p\colon
    \left\|\frac{\xi b}{p}\right\|_{\T}\leq\rho
    \text{ for every }\xi\in\Gamma
  \right\}.
\]
Its rank is $|\Gamma|$, and we adopt the convention
$B(\varnothing,\rho)=\F_p$.

We use the following standard estimate, which can be found in
\cite[Lemma~4.20]{TV06}:
\begin{equation}
  \frac{|B(\Gamma,\rho)|}{p}\geq\rho^{|\Gamma|}.
  \label{eq:bohr-size}
\end{equation}

\subsection{Directional Bohr kernels}

We use symmetrized Bohr kernels to construct the structured
components in the regularity decomposition. The kernel estimates below control their
Fourier multipliers and their change under translations.

For functions $\phi,\psi\colon\F_p\to\mathbb C$, define the normalized
convolution and involution by
\[
  (\phi*\psi)(t):=\E_{s\in\F_p}\phi(s)\psi(t-s),
  \qquad
  \widetilde\phi(t):=\overline{\phi(-t)}.
\]
For $v\in G$, $b\in\F_p$, and $h\colon G\to\mathbb C$, put
\[
  T_\phi^v h(\bx):=\E_{t\in\F_p}\phi(t)h(\bx-tv),
  \qquad
  \tau_b\phi(t):=\phi(t+b),
  \qquad
  \tau_b^v h(\bx):=h(\bx+bv).
\]
For a real number $u$, put $(u)_+:=\max\{u,0\}$. Let
$\varnothing\neq\Gamma\subseteq\F_p\setminus\{0\}$ and
$0<\rho\leq1/4$. Following Tao's smoothing approach~\cite{Tao14},
we use the normalized Lipschitz cutoff associated with $B(\Gamma,\rho)$:
\[
  \mu_{\Gamma,\rho}(t)
  :=
  \frac{
    \left(
      1-\rho^{-1}\max_{\xi\in\Gamma}
      \left\|\frac{\xi t}{p}\right\|_{\T}
    \right)_+
  }{
    \displaystyle
    \E_{s\in\F_p}
    \left(
      1-\rho^{-1}\max_{\xi\in\Gamma}
      \left\|\frac{\xi s}{p}\right\|_{\T}
    \right)_+
  }.
\]
For $\Gamma=\varnothing$, we set $\mu_{\varnothing,\rho}=1$.  For fixed
$\Gamma$ and $\rho$, write $\mu:=\mu_{\Gamma,\rho}$.
The structured components will be formed using the symmetrized
kernel $\mu*\widetilde\mu$. The following lemma records the Fourier
and translation estimates needed for these averages.

\begin{lemma}
\label{lem:bohr-kernel-properties}
Let $\mu:=\mu_{\Gamma,\rho}$ for some $\Gamma\subseteq\F_p\setminus\{0\}$ and $0<\rho\leq1/4$.
Then, the functions $\mu$ and $\mu*\widetilde\mu$ are nonnegative and have mean
one.  Moreover, we have
\begin{align}
  \|\mu\|_{\mathcal L^2(\F_p)}
  &\ll_{|\Gamma|,\rho}1,
  \label{eq:qualitative-kernel-L2}\\
  \widehat{\mu*\widetilde\mu}(\xi)
  &=|\widehat\mu(\xi)|^2,
  \label{eq:qualitative-kernel-multiplier}\\
  |1-\widehat\mu(\xi)|
  &\ll\rho
  \qquad(\xi\in\Gamma\cup\{0\}).
  \label{eq:qualitative-low-frequency}
\end{align}
For $0< \sigma \leq 1/2$,
if $b\in B(\Gamma,\sigma)$, then
\begin{equation}
  \|\tau_b(\mu*\widetilde\mu)-\mu*\widetilde\mu
  \|_{\mathcal L^1(\F_p)}
  \ll_{|\Gamma|,\rho}\sigma.
  \label{eq:qualitative-kernel-translation}
\end{equation}

\end{lemma}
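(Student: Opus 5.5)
We verify the claims in order. Write $\psi(t):=(1-\rho^{-1}\max_{\xi\in\Gamma}\|\xi t/p\|_{\T})_+$, so that $\mu=\psi/\E\psi$ for $\Gamma\neq\varnothing$; the case $\Gamma=\varnothing$, $\mu=1$, is trivial throughout, since then $\widehat\mu=\1_{\{0\}}$ and $\mu*\widetilde\mu=1$ is translation invariant.

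\emph{Nonnegativity, mean one, and the multiplier identity.} Nonnegativity and mean one of $\mu$ are immediate from the definition. Then $\mu*\widetilde\mu\geq0$ as an average of products of nonnegative functions. Since $\widetilde\mu(t)=\mu(-t)$ is real, the normalized convolution gives
\[
\E(\mu*\widetilde\mu)=\E\mu\cdot\E\widetilde\mu=1 .
\]
For \eqref{eq:qualitative-kernel-multiplier} we use $\widehat{\phi*\psi}=\widehat\phi\,\widehat\psi$ for normalized transforms. We also use $\widehat{\widetilde\mu}(\xi)=\E_t\overline{\mu(-t)}e_p(-\xi t)=\overline{\widehat\mu(\xi)}$. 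Together these give $|\widehat\mu(\xi)|^2$.

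\emph{The $L^2$ bound.} The set where $\psi\geq1/2$ contains $B(\Gamma,\rho/2)$, so by \eqref{eq:bohr-size}
\[
\E\psi\geq\tfrac12(\rho/2)^{|\Gamma|}.
\]
Since $0\le\psi\le1$, we get $\|\mu\|_{\mathcal L^\infty}\leq2(2/\rho)^{|\Gamma|}$, which is stronger than \eqref{eq:qualitative-kernel-L2}.

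\emph{Low frequencies.} For \eqref{eq:qualitative-low-frequency}, the case $\xi=0$ is exact. For $\xi\in\Gamma$, write
\[
1-\widehat\mu(\xi)=\E_t\mu(t)\bigl(1-e_p(-\xi t)\bigr).
\]
On the support of $\mu$ we have $\|\xi t/p\|_{\T}\leq\rho$, hence
\[
|1-e_p(-\xi t)|\leq2\pi\|\xi t/p\|_{\T}\leq2\pi\rho .
\]
Since $\mu$ is nonnegative with mean one, $|1-\widehat\mu(\xi)|\leq2\pi\rho$.

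\emph{Translation estimate.} This is the only step with content. The key point is that $\psi$ is Lipschitz with respect to the Bohr "norm" $\|b\|_\Gamma:=\max_{\xi\in\Gamma}\|\xi b/p\|_{\T}$. Indeed, $u\mapsto(1-u/\rho)_+$ is $\rho^{-1}$-Lipschitz. Also, $t\mapsto\max_\xi\|\xi t/p\|_{\T}$ changes by at most $\|b\|_\Gamma$ under $t\mapsto t+b$, by the triangle inequality in $\T$ applied for each $\xi$ and then taking the maximum. Hence for $b\in B(\Gamma,\sigma)$,
\[
|\psi(t+b)-\psi(t)|\leq\sigma/\rho\quad\text{for all } t .
\]
Consequently
\[
\|\tau_b\mu-\mu\|_{\mathcal L^1}\leq\|\tau_b\mu-\mu\|_{\mathcal L^\infty}\leq\frac{\sigma}{\rho\,\E\psi}\ll_{|\Gamma|,\rho}\sigma .
\]
Translation commutes with convolution: $\tau_b(\mu*\widetilde\mu)=(\tau_b\mu)*\widetilde\mu$. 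Young's inequality for normalized convolution, $\|\phi*\psi\|_{\mathcal L^1}\leq\|\phi\|_{\mathcal L^1}\|\psi\|_{\mathcal L^1}$, then gives
\[
\|\tau_b(\mu*\widetilde\mu)-\mu*\widetilde\mu\|_{\mathcal L^1}\leq\|\tau_b\mu-\mu\|_{\mathcal L^1}\|\widetilde\mu\|_{\mathcal L^1}.
\]
The last factor equals $1$, which yields \eqref{eq:qualitative-kernel-translation}.

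The only point requiring care is the lower bound on the normalizing constant $\E\psi$ via \eqref{eq:bohr-size}. It is what makes all implied constants depend only on $|\Gamma|$ and $\rho$, and not on $p$ or $\Gamma$ itself. The restrictions $\rho\leq1/4$ and $\sigma\le 1/2$ only ensure that the Bohr sets involved are well defined under the paper's convention.
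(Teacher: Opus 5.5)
Your proof is correct and follows the paper's argument essentially step for step. The common steps are: the lower bound on the normalizing mean via $B(\Gamma,\rho/2)$ and \eqref{eq:bohr-size}; the pointwise bound $|1-e_p(-\xi t)|\le 2\pi\rho$ on the support; the $\sigma/\rho$ Lipschitz bound for the unnormalized cutoff under translation by $b\in B(\Gamma,\sigma)$; and Young's inequality with $\|\widetilde\mu\|_{\mathcal L^1}=1$. Your explicit bound $\|\mu\|_{\mathcal L^\infty}\le 2(2/\rho)^{|\Gamma|}$ just makes the paper's implicit constants concrete.
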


\begin{proof}[Proof of Lemma~\ref{lem:bohr-kernel-properties}]
If $\Gamma=\varnothing$, then $\mu=1$ and all assertions are immediate.
Now, we suppose that $\Gamma\neq\varnothing$. The cutoff before normalization
takes values in $[0,1]$ and is at least $1/2$ on
$B(\Gamma,\rho/2)$. By \eqref{eq:bohr-size}, its normalized mean is
bounded below by a positive constant depending only on $|\Gamma|$
and $\rho$, and this gives \eqref{eq:qualitative-kernel-L2}.

By construction, $\mu$ is nonnegative and has mean one, and the same
holds for $\mu*\widetilde\mu$. The convolution identity for the Fourier
transform gives \eqref{eq:qualitative-kernel-multiplier}.

For $\xi\in\Gamma\cup\{0\}$ and $t\in\operatorname{supp}\mu$, we have
$\|\xi t/p\|_{\T}\leq\rho$, and hence
$|1-e_p(-\xi t)|\leq2\pi\rho$.
Since $\mu$ is nonnegative and has mean one, it follows that
\[
  |1-\widehat\mu(\xi)|
  \leq
  \E_{t\in\F_p}\mu(t)|1-e_p(-\xi t)|
  \leq2\pi\rho.
\]
This proves \eqref{eq:qualitative-low-frequency}.

Finally, let $b\in B(\Gamma,\sigma)$. The triangle inequality on
$\T$ implies that the unnormalized cutoff changes by at most
$\sigma/\rho$ under translation by $b$. Using the lower bound
for the normalizing denominator, we obtain
\[
  \|\tau_b\mu-\mu\|_{\mathcal L^1(\F_p)}
  \ll_{|\Gamma|,\rho}\sigma.
\]
Since $\|\widetilde\mu\|_{\mathcal L^1(\F_p)}=1$, Young's inequality
therefore gives
\[
  \|\tau_b(\mu*\widetilde\mu)-\mu*\widetilde\mu
    \|_{\mathcal L^1(\F_p)}
  \leq
  \|\tau_b\mu-\mu\|_{\mathcal L^1(\F_p)}
  \ll_{|\Gamma|,\rho}\sigma,
\]
and this proves \eqref{eq:qualitative-kernel-translation}.
\end{proof}

We now express these kernel properties in terms of directional
averaging operators. The resulting identities will be used to compare
successive averages and control their changes under translation.

Let $\Gamma,\Gamma'\subseteq\F_p\setminus\{0\}$,
$0<\rho,\rho'\leq1/4$, $b,\xi\in\F_p$, and let
$v\in G\setminus\{0\}$.  Write
\[
  \mu:=\mu_{\Gamma,\rho},
  \qquad
  \mu':=\mu_{\Gamma',\rho'}.
\]
If $h\colon G\to\mathbb C$, then
\begin{equation}
  \widehat{T_{\mu*\widetilde\mu}^v h}(\bx;v;\xi)
  =|\widehat\mu(\xi)|^2\widehat h(\bx;v;\xi),
  \label{eq:smoothing-multiplier}
\end{equation}
and fibrewise Parseval gives
\begin{align}
  &\|T_{\mu'*\widetilde{\mu'}}^v h
      -T_{\mu*\widetilde\mu}^v h
    \|_{\mathcal L^2(G)}^2
  \notag\\
  &\qquad=
  \sum_{\xi\in\F_p}
  \E_{\bx\in G}|\widehat h(\bx;v;\xi)|^2
  \left(
    |\widehat{\mu'}(\xi)|^2-|\widehat\mu(\xi)|^2
  \right)^2.
  \label{eq:smoothing-parseval}
\end{align}
The translation estimate passes to the smoothed function:
if $0\leq h\leq1$, then
\begin{equation}
  \|\tau_b^vT_{\mu*\widetilde\mu}^vh
       -T_{\mu*\widetilde\mu}^vh
  \|_{\mathcal L^\infty(G)}
  \leq
  \|\tau_b(\mu*\widetilde\mu)-\mu*\widetilde\mu
  \|_{\mathcal L^1(\F_p)}.
  \label{eq:smoothing-invariance}
\end{equation}
Indeed, a change of variables gives
$\tau_b^vT_{\mu*\widetilde\mu}^vh
=T_{\tau_b(\mu*\widetilde\mu)}^vh$,
and the bound follows from $|h|\leq1$.

\section{Proof of Theorem~\ref{thm:main}}
\label{sec:simultaneous-proof}

Following the approach developed in~\cite{CDH26}, we construct a
regularity decomposition in each prescribed direction.
The generalized von Neumann estimate controls the pseudorandom
contributions. A Bohr-set estimate provides many parameters for
which the structured components are almost invariant under the
corresponding polynomial shifts, and a convolution inequality
gives the required lower bounds for their contributions.
We then select one nonzero parameter that works for every
choice of $\omega$.

\subsection{A generalized von Neumann estimate}

We first establish an $\mathcal L^2(\F_p)$ estimate for the
polynomial counting averages in terms of the mixed directional
box norms. This estimate will allow us to discard the
pseudorandom contributions after applying the regularity
decomposition.
We begin with Kuca's finite-field multidimensional polynomial
Szemer\'edi theorem~\cite[Theorem~1.2]{Kuca24}.

\begin{theorem}
\label{thm:kuca}
Let $D,r\in \N$. Let $\mathcal{W}=\{w_1,\ldots,w_r\in\Z^D\setminus\{0\}\}$, and $\mathcal{R}=\{P_1,\ldots,P_r\in\Z[t]\}$ be a
collection of linearly independent polynomials with zero constant terms. There exist $p_1=p_1(\mathcal{W},\mathcal{R})$, and $c=c(\mathcal{R})>0$ such that for every prime $p \geq p_1$, we have 
\[
  \E_{\bx\in\F_p^D}\E_{d\in\F_p}
  f_0(\bx)\prod_{i=1}^r
  f_i\bigl(\bx+P_i(d)w_i\bigr)
  =
  \E_{\bx\in\F_p^D}f_0(\bx)
  \prod_{i=1}^r
  \E_{n_i\in\F_p}f_i(\bx+n_iw_i)
  +O_{\mathcal{R}}(p^{-c}),
\]
for all $1$-bounded functions
$f_0,\ldots,f_r\colon\F_p^D\to\mathbb C$.
\end{theorem}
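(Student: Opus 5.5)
Theorem~\ref{thm:kuca} is quoted from \cite[Theorem~1.2]{Kuca24}, so we invoke it rather than reprove it. If we had to reconstruct it, we would follow Peluse's degree-lowering strategy \cite{P19} as adapted by Kuca to the multidimensional setting. The goal is to show that the difference between the two sides is bounded by a positive power of the mixed directional box norms $\|f_i\|_{G,\langle w_i\rangle}$. These norms vanish on the main term after one averages along the direction $w_i$.

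The steps, in order, would be as follows.
\begin{enumerate}
\item \textbf{PET induction.} Apply the Cauchy--Schwarz inequality repeatedly, $s=O_{\mathcal R}(1)$ times, with van der Corput differencing in $d$. This bounds the counting average, with $f_0$ fixed, by a polynomial power of a box norm $\|f_r\|_{H_1,\ldots,H_s}$ plus $O(p^{-c})$. Here each $H_j$ is either $G$ or a line $\langle w_r\rangle$, up to coefficients that are polynomial in the differencing parameters; we take $P_r$ of maximal degree. Linear independence of the $P_i$ and $p\ge p_1(\mathcal W,\mathcal R)$ ensure that the relevant leading coefficients are nonzero modulo $p$, so no degenerate differencing parameters survive except on an $O(p^{-1})$ proportion.
\item \textbf{Degree lowering.} If $\|f_r\|_{H_1,\ldots,H_s}$ is large, replace $f_r$ by a dual function and use the inverse theorem for the $U^2$-type directional norm, namely Lemma~\ref{lem:directional-fourier-control} read backwards. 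The dual function then correlates on $\langle w_r\rangle$-cosets with phases $e_p(\xi(\bx)n)$. Substituting these phases into the configuration and using the linear independence of the $P_i$ (equidistribution of $e_p(\xi P_r(d))$ against the other polynomials) forces $\xi$ to be essentially zero. This lowers $s$ by one. Iterating, we reach control by $\|f_r\|_{G,\langle w_r\rangle}$.
\item \textbf{Peeling off $f_r$.} Decompose $f_r=\E_{n}f_r(\cdot+nw_r)+(f_r-\E_nf_r(\cdot+nw_r))$. The second piece has $\|\cdot\|_{G,\langle w_r\rangle}$ of size $O(p^{-1/4})$ by fibrewise Fourier analysis, because its zero directional frequency vanishes and a $1$-bounded function has small nonzero directional coefficients only on average. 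The first piece is $\langle w_r\rangle$-invariant, so the shift $P_r(d)w_r$ disappears and $r$ drops by one.
\item \textbf{Induction on $r$.} Apply the same argument to the remaining $r-1$ polynomials to obtain the product formula.
\end{enumerate}

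We expect the main obstacle to be the degree-lowering step when the $w_i$ are linearly dependent or even parallel. The differenced configurations then mix directions, and the norms produced by PET are not a priori along $\langle w_r\rangle$ alone. We would first try Kuca's device of treating $\bx\mapsto(\bx,\text{coefficients along each }w_i)$ as a lifted configuration. Within that lift we would run the degree lowering on each line separately, using the linear independence of $\mathcal R$ to separate the frequencies.
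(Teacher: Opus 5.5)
Invoking \cite[Theorem~1.2]{Kuca24} is exactly what the paper does. Theorem~\ref{thm:kuca} is quoted there without proof, so as a proof your proposal coincides with the paper's.

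Your optional reconstruction, however, has a step that is false as stated. In Step~3 you claim that $g:=f_r-\E_n f_r(\cdot+nw_r)$ satisfies $\|g\|_{G,\langle w_r\rangle}=O(p^{-1/4})$. Unwinding the definition and applying fibrewise Parseval gives
\[
  \|g\|_{G,\langle w_r\rangle}^4
  =\E_{n\in\F_p}\Bigl|\E_{\bx\in G}g(\bx)\overline{g(\bx+nw_r)}\Bigr|^2
  =\sum_{\xi\neq0}\Bigl(\E_{\bx\in G}|\widehat{f_r}(\bx;w_r;\xi)|^2\Bigr)^2 .
\]
The energy bound $\sum_\xi\E_{\bx}|\widehat{f_r}(\bx;w_r;\xi)|^2\leq1$ only makes a typical summand small, not the largest one. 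Concretely, let $\lambda$ be a linear form on $\F_p^D$ with $\lambda(w_r)\neq0$ and put $f_r(\bx)=e_p(\lambda(\bx))$. Then $g=f_r$ and $\|g\|_{G,\langle w_r\rangle}=1$.

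So stopping the degree lowering at $\|f_r\|_{G,\langle w_r\rangle}$ is one step short. The theorem itself implies control by the directional $U^1$-type quantity $\bigl(\E_{\bx}|\E_n f_r(\bx+nw_r)|^2\bigr)^{1/2}$, and this is what a reconstruction must reach. Your phrase ``forces $\xi$ to be essentially zero'' would deliver exactly this if carried out, and with it the second piece in Step~3 contributes exactly zero.

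Note that the paper uses the theorem in the opposite direction. Lemma~\ref{lem:directional-vn} deduces the weaker $\|\cdot\|_{G,\langle v_i\rangle}$-control from the counting formula, and that weaker control alone does not let you peel off $f_r$.

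Finally, Step~1 is too optimistic as written. Multidimensional PET yields box norms along subgroups that depend on the differencing parameters and on several of the $w_i$, not just on $G$ and $\langle w_r\rangle$. Upgrading these to norms along $\langle w_r\rangle$ is the main content of Kuca's argument, as you yourself anticipate in your last paragraph.
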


The following consequence applies uniformly to every
subconfiguration. The bound can be obtained from any one of the
functions $g_i$, $i\in S$.

\begin{lemma}
\label{lem:directional-vn}
Let $D,k\in \N$, and denote $G=\F_p^D$. Let $\mathcal{V}=\{v_1,\ldots,v_k\in\Z^D\setminus\{0\}\}$, and $\mathcal{P}=\{P_1,\ldots,P_k\in\Z[t]\}$ be a
collection of linearly independent polynomials with zero constant terms. There exist $p_0=p_0(\mathcal{V},\mathcal{P})$, and $c=c(\mathcal{P})>0$ such that for every prime $p \geq p_0$, we have
\begin{align}
  \E_{d\in\F_p}
  \left|
    \E_{\bx\in G}g_0(\bx)
    \prod_{j\in S}g_j\bigl(\bx+P_j(d)v_j\bigr)
  \right|^2
  &\leq
  \min_{i\in S}\|g_i\|_{G,\langle v_i\rangle}^{2}
  +O_{\cP}(p^{-c})
  \label{eq:vn-box}\\
  &\leq
  \min_{i\in S}
  \left(
    \sup_{\xi\in\F_p}
    \E_{\bx\in G}|\widehat g_i(\bx;v_i;\xi)|^2
  \right)^{1/2}
  +O_{\cP}(p^{-c}),
  \label{eq:vn-fourier}
\end{align}
for every nonempty $S\subseteq\{1,\ldots,k\}$ and all $1$-bounded
functions $g_0\colon G\to\mathbb C$ and
$g_i\colon G\to\mathbb C$ for $i\in S$.
\end{lemma}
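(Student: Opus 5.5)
The plan is to expand the square in \eqref{eq:vn-box} and apply Theorem~\ref{thm:kuca} to a doubled configuration, reducing everything to a box-norm estimate by Cauchy--Schwarz. The Fourier bound \eqref{eq:vn-fourier} then follows from Lemma~\ref{lem:directional-fourier-control}.

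Fix a nonempty $S$ and some $i\in S$. First expand
\[
\E_{d}\Bigl|\E_{\bx}g_0(\bx)\prod_{j\in S}g_j(\bx+P_j(d)v_j)\Bigr|^2
=\E_{\bx,\bh\in G}\E_d\, g_0(\bx)\overline{g_0(\bx+\bh)}\prod_{j\in S}g_j(\bx+P_j(d)v_j)\overline{g_j(\bx+\bh+P_j(d)v_j)}.
\]
Next, regard the expanded average as an instance of Theorem~\ref{thm:kuca} on $\F_p^{2D}$, with variable $(\bx,\bh)$. For each $j\in S$ use the direction $w_j=(v_j,0)$ and the function $F_j(\bx,\bh)=g_j(\bx)\overline{g_j(\bx+\bh)}$, which is unchanged when $(\bx,\bh)$ is shifted by $(nv_j,0)$. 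Put $F_0(\bx,\bh)=g_0(\bx)\overline{g_0(\bx+\bh)}$. The polynomials $P_j$, $j\in S$, are linearly independent with zero constant terms, and the $w_j$ are nonzero integer vectors. Hence for $p\ge p_1$ the average equals
\[
\E_{\bx,\bh}F_0(\bx,\bh)\prod_{j\in S}\E_{n_j}F_j(\bx+n_jv_j,\bh)+O_{\cP}(p^{-c}).
\]
The constant $c$ depends only on the subfamily, so taking the minimum over the finitely many $S$ gives $c=c(\cP)$, and the maximum of the $p_1$ gives $p_0$.

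To bound the main term, apply Cauchy--Schwarz in $(\bx,\bh)$ to strip off $F_0$ and every $F_j$ with $j\ne i$, using $1$-boundedness. This leaves
\[
\bigl(\E_{\bx,\bh}|\E_n F_i(\bx+nv_i,\bh)|^2\bigr)^{1/2}.
\]
Expanding the square gives $\E_{\bx,\bh}\E_{n,n'}\triangle_{\bh}g_i(\bx+nv_i)\overline{\triangle_{\bh}g_i(\bx+n'v_i)}$. Substituting $\bx\mapsto\bx-nv_i$ and $m=n'-n$ turns this into
\[
\E_{\bx}\E_{\bh\in G}\E_{m}\triangle_{\bh,mv_i}g_i(\bx)=\|g_i\|_{G,\langle v_i\rangle}^4.
\]
The main term is therefore at most $\|g_i\|_{G,\langle v_i\rangle}^2$. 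Since $i\in S$ was arbitrary, this gives \eqref{eq:vn-box}.

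For \eqref{eq:vn-fourier}, take square roots in Lemma~\ref{lem:directional-fourier-control}. The one point needing care is that Theorem~\ref{thm:kuca} applies in dimension $2D$ with directions $(v_j,0)$. The only hypotheses used are that the directions are nonzero and the polynomials are independent, so no independence of the $v_j$ is needed; the dependence of $p_0$ on $\mathcal V$ enters through $p_1$. The error term must also stay real-valued after taking the modulus.
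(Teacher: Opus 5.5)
Your proof is correct and is essentially the paper's argument: expand the square, apply Theorem~\ref{thm:kuca}, discard all factors except the $i$th using $1$-boundedness and Cauchy--Schwarz to reach $\|g_i\|_{G,\langle v_i\rangle}^2$, then invoke Lemma~\ref{lem:directional-fourier-control}. The only difference is cosmetic. The paper applies Theorem~\ref{thm:kuca} in $G$ for each fixed $\bh$ to the functions $\triangle_{\bh}g_j$ and uses that the error is uniform in $\bh$, whereas you apply it once in $\F_p^{2D}$ with directions $(v_j,0)$. Your aside that $F_j$ is unchanged under shifts by $(nv_j,0)$ is false, but it is never used.
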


\begin{proof}[Proof of Lemma~\ref{lem:directional-vn}]
Expanding the square and writing the second averaging variable as $\bx+\bh$ gives
\begin{align*}
  \E_{d\in\F_p}
  \left|
    \E_{\bx\in G}g_0(\bx)
    \prod_{j\in S}g_j\bigl(\bx+P_j(d)v_j\bigr)
  \right|^2
  &=
  \E_{\bx,\bh\in G}\E_{d\in\F_p}
  \triangle_{\bh}g_0(\bx)
  \prod_{j\in S}
  \triangle_{\bh}g_j\bigl(\bx+P_j(d)v_j\bigr).
\end{align*}
Since there are only finitely many subfamilies, we fix one exponent $c>0$,
one implicit constant, and one prime threshold that work for all of them.
For each fixed $\bh$, apply Theorem~\ref{thm:kuca} to the subfamily indexed
by $S$.  Its error is uniform in $\bh$, since every multiplicative
derivative remains $1$-bounded.  We obtain
\[
  \E_{d\in\F_p}
  \left|
    \E_{\bx\in G}g_0(\bx)
    \prod_{j\in S}g_j\bigl(\bx+P_j(d)v_j\bigr)
  \right|^2
  =
  \E_{\bx,\bh}
  \triangle_{\bh}g_0(\bx)
  \prod_{j\in S}
  \E_{n_j\in\F_p}
  \triangle_{\bh}g_j(\bx+n_jv_j)
  +O_{\cP}(p^{-c}).
\]
Fix $i\in S$.  Bounding all factors except the $i$th by $1$, and then using
Cauchy--Schwarz yields
\begin{align*}
  &\E_{d\in\F_p}
  \left|
    \E_{\bx\in G}g_0(\bx)
    \prod_{j\in S}g_j\bigl(\bx+P_j(d)v_j\bigr)
  \right|^2
  \\
  &\qquad\leq
  \left|
    \E_{\bx,\bh}
    \triangle_{\bh}g_0(\bx)
    \prod_{j\in S}
    \E_{n_j\in\F_p}
    \triangle_{\bh}g_j(\bx+n_jv_j)
  \right|
  +O_{\cP}(p^{-c})\\
  &\qquad\leq
  \left(
    \E_{\bx,\bh}
    \left|
      \E_{n_i\in\F_p}
      \triangle_{\bh}g_i(\bx+n_iv_i)
    \right|^2
  \right)^{1/2}
  +O_{\cP}(p^{-c})\\
  &\qquad=
  \|g_i\|_{G,\langle v_i\rangle}^{2}
  +O_{\cP}(p^{-c}).
\end{align*}
Taking
the minimum over $i$ proves
\eqref{eq:vn-box}, and Lemma~\ref{lem:directional-fourier-control} gives
\eqref{eq:vn-fourier}.
\end{proof}

\subsection{Directional arithmetic regularity decomposition}

The preceding estimate motivates a decomposition adapted to each
prescribed direction. Following Tao's arithmetic regularity argument~\cite{Tao14}, we
construct these decompositions with structured components defined
using symmetrized Bohr kernels. The directional Fourier
coefficients of the pseudorandom components are small in
$\mathcal L^2(G)$, uniformly in frequency, and the remaining
components have small $\mathcal L^2(G)$ norms.

\begin{proposition}
\label{prop:directional-regularity}
Let $D,k\in \N$, and denote $G=\F_p^D$.
Let $0<\varepsilon<1$ and let
$\eta_1,\eta_2\colon[1,\infty)\to[2,\infty)$ be nondecreasing growth
functions.  Let
$p$ be prime, let
$v_1,\ldots,v_k\in G\setminus\{0\}$, and let
$f\colon G\to[0,1]$.  Then there are finite sets
$\Gamma_i\subseteq\F_p\setminus\{0\}$, a radius
$\sigma\in(0,1/4]$, and parameters $\rho_1,\rho_2\in(0,1/2]$ such that
\begin{equation}
  \max_{1\leq i\leq k}|\Gamma_i|
  =O_{\varepsilon,k,\eta_1,\eta_2}(1),
  \qquad
  \rho_1\leq
  \frac{1}{
    \eta_1(\max_i|\Gamma_i|+\varepsilon^{-1})
  },
  \qquad
  \rho_2\leq
  \frac{1}{\eta_2(\rho_1^{-1})}.
  \label{eq:regularity-scales}
\end{equation}
The parameters $\rho_1,\rho_2$ and the radius $\sigma$ are bounded away
from zero in terms of $\varepsilon,k,\eta_1$, and $\eta_2$ alone.
For every $1\leq i\leq k$, put
$\mu_i:=\mu_{\Gamma_i,\sigma}$.  There is a decomposition
\[
  f=f_{\str}^{(i)}+f_{\psr}^{(i)}+f_{\sml}^{(i)}
\]
with the following properties:
\begin{enumerate}
  \item The functions $f_{\str}^{(i)}$, $f_{\psr}^{(i)}$, and
$f_{\sml}^{(i)}$ are all $1$-bounded;

  \item
  \[
    f_{\str}^{(i)}
    =T_{\mu_i*\widetilde\mu_i}^{v_i}f,
    \qquad
    0\leq f_{\str}^{(i)}\leq1,
    \qquad
    \E f_{\str}^{(i)}=\E f;
  \]

  \item if $b\in B(\Gamma_i,\rho_1)$, then
  \begin{equation}
    \|\tau_b^{v_i}f_{\str}^{(i)}
      -f_{\str}^{(i)}\|_{\mathcal L^\infty(G)}
    \leq\varepsilon;
    \label{eq:structured-invariance}
  \end{equation}

  \item
  \begin{equation}
    \sup_{\xi\in\F_p}
    \left(
      \E_{\bx\in G}
      |\widehat{f_{\psr}^{(i)}}(\bx;v_i;\xi)|^2
    \right)^{1/2}
    \leq\rho_2;
    \label{eq:pseudorandom-fourier}
  \end{equation}

  \item
  \begin{equation}
    \sum_{i=1}^k
    \|f_{\sml}^{(i)}\|_{\mathcal L^2(G)}^2
    \leq\varepsilon^2.
    \label{eq:synchronized-smallness}
  \end{equation}
\end{enumerate}
In particular,
\[
  \|f_{\psr}^{(i)}\|_{G,\langle v_i\rangle}^{2}
  \leq\rho_2.
\]
All bounds are independent of $p,f,D$, and the particular nonzero vectors
$v_i$.
\end{proposition}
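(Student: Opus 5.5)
We follow Tao's energy-increment scheme for arithmetic regularity, run simultaneously in all $k$ directions with a common sequence of scales. Write $E_i(\xi):=\E_{\bx\in G}|\widehat f(\bx;v_i;\xi)|^2$. By fibrewise Parseval, $\sum_{\xi}E_i(\xi)=\|f\|_{\mathcal L^2(G)}^2\leq1$ for each $i$. Hence for any threshold $\delta>0$ the set of \emph{large frequencies} $\{\xi\neq0\colon E_i(\xi)\geq\delta^2\}$ has size at most $\delta^{-2}$, uniformly in $p,D,v_i$.

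We define a nested sequence of data iteratively. Start with $\Gamma_i^{(0)}=\varnothing$ and some initial $\rho_1^{(0)}$. At stage $m$ we have sets $\Gamma_i^{(m)}$ of size $O_m(1)$. Set $M_m:=\max_i|\Gamma_i^{(m)}|+\varepsilon^{-1}$, $\rho_1^{(m)}:=1/\eta_1(M_m)$ and $\rho_2^{(m)}:=1/\eta_2(1/\rho_1^{(m)})$. We enlarge each $\Gamma_i^{(m)}$ to $\Gamma_i^{(m+1)}$ by adjoining all nonzero $\xi$ with $E_i(\xi)\geq(\rho_2^{(m)}/C)^2$. We also attach a radius $\sigma_m$, chosen tiny depending on $\rho_2^{(m)}$.

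The candidate decomposition at stage $m$ takes $\mu_i=\mu_{\Gamma_i^{(m)},\sigma}$ with
\[
  f_{\str}^{(i)}=T^{v_i}_{\mu_i*\widetilde\mu_i}f,\qquad
  f_{\sml}^{(i)}=T^{v_i}_{\mu_i'*\widetilde\mu_i'}f-f_{\str}^{(i)},\qquad
  f_{\psr}^{(i)}=f-T^{v_i}_{\mu_i'*\widetilde\mu_i'}f,
\]
where $\mu_i'$ is the finer kernel built from $\Gamma_i^{(m+1)}$ with the next, much smaller radius. With the $\mathcal L^2$ bound one could swap roles appropriately.

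Properties (1)–(2) come directly from Lemma~\ref{lem:bohr-kernel-properties}. The kernel is nonnegative with mean one, and $0\leq f\leq1$, so the averages lie in $[0,1]$ and the differences are $1$-bounded up to a factor of $2$, which we absorb by rescaling $\varepsilon,\rho_2$. Also $\E f_{\str}^{(i)}=\E f$.

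Property (3) follows from \eqref{eq:smoothing-invariance} and \eqref{eq:qualitative-kernel-translation}. Note that $b\in B(\Gamma_i,\rho_1)$ gives an $\mathcal L^1$ change $\ll_{|\Gamma_i|,\sigma}\rho_1$, so $\rho_1$ must be chosen small relative to $\sigma$ and $|\Gamma_i|$. This is exactly why $\eta_1$ is applied to $\max_i|\Gamma_i|+\varepsilon^{-1}$. We let $\sigma$ be a fixed function of $\varepsilon$ (e.g.\ $\sigma\asymp\varepsilon$) and then take $\rho_1$ small enough that the implied constant times $\rho_1$ is at most $\varepsilon$.

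Property (4): by \eqref{eq:smoothing-multiplier}, $\widehat{f_{\psr}^{(i)}}(\bx;v_i;\xi)=(1-|\widehat{\mu_i'}(\xi)|^2)\widehat f(\bx;v_i;\xi)$. For $\xi\in\Gamma_i^{(m+1)}\cup\{0\}$, \eqref{eq:qualitative-low-frequency} makes the multiplier $\ll$ the fine radius, hence $\leq\rho_2$. For the remaining $\xi$ we have $E_i(\xi)<(\rho_2/C)^2$ and the multiplier is at most $1$. The final claim $\|f_{\psr}^{(i)}\|_{G,\langle v_i\rangle}^2\leq\rho_2$ then follows from Lemma~\ref{lem:directional-fourier-control} after rescaling.

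Property (5), the \emph{synchronized smallness}, is the main obstacle and is where pigeonholing enters. By \eqref{eq:smoothing-parseval}, the quantity $\sum_i\|f_{\sml}^{(i)}\|_2^2$ at stage $m$ equals
\[
  \sum_i\sum_\xi E_i(\xi)\bigl(|\widehat{\mu_i^{(m+1)}}(\xi)|^2-|\widehat{\mu_i^{(m)}}(\xi)|^2\bigr)^2 .
\]
We want to telescope this over $m$, with total $\ll k$, so that some $m\leq O(k\varepsilon^{-2})$ gives a value $\leq\varepsilon^2$. Exact monotonicity of $|\widehat{\mu^{(m)}}(\xi)|^2$ in $m$ fails for Lipschitz Bohr cutoffs. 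The plan is to use an \emph{energy} version instead: pick the stage where the increment of $\sum_i\|T^{v_i}_{\mu_i^{(m)}*\widetilde\mu_i^{(m)}}f\|_2^2$ is small. This sequence is bounded by $k$, and it is almost monotone up to errors controlled by \eqref{eq:qualitative-low-frequency} on the nested frequency sets together with the summability of $E_i$ off $\Gamma_i$. A Cauchy–Schwarz argument, or a direct bound $(a-b)^2\leq|a^2-b^2|$ for $a,b\in[0,1]$, then converts small energy increment into smallness of $f_{\sml}$. If this almost-monotonicity proves delicate, the fallback is to choose the scales $\sigma_m$ so rapidly decreasing that the cross errors are summable with total $\leq\varepsilon^2/2$.

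Since the number of stages is $O(k\varepsilon^{-2})$ and each stage's data depend only on the previous stage via $\eta_1,\eta_2$, all sizes and parameters are bounded in terms of $\varepsilon,k,\eta_1,\eta_2$ alone. Nothing depends on $p,D,v_i$, since only the Parseval identity $\sum_\xi E_i(\xi)\leq1$ and one-dimensional kernel estimates are used.
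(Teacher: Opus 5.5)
\textbf{There is a genuine gap in property (5)}, exactly at the step you flag as the main obstacle. Write $\mu_i=\mu_{\Gamma_i^{(m)},\sigma_m}$, $\mu_i'=\mu_{\Gamma_i^{(m+1)},\sigma_{m+1}}$, $a_\xi=|\widehat{\mu_i'}(\xi)|^2$ and $b_\xi=|\widehat{\mu_i}(\xi)|^2$.
\begin{itemize}
\item A small energy increment only bounds the signed sum $\sum_\xi E_i(\xi)(a_\xi^2-b_\xi^2)$.
\item The inequality $(a-b)^2\leq|a^2-b^2|$ instead requires a bound on $\sum_\xi E_i(\xi)\,|a_\xi^2-b_\xi^2|$.
\item To pass from the first to the second, you need $b_\xi^2-a_\xi^2$ to be small for \emph{every} $\xi$.
\end{itemize}
On $\Gamma_i^{(m+1)}\cup\{0\}$ this follows from \eqref{eq:qualitative-low-frequency}. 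Off $\Gamma_i^{(m+1)}$, however, you only know that each $E_i(\xi)$ lies below the threshold, and these terms can still sum to order one. So ``summability of $E_i$ off $\Gamma_i$'' controls nothing there. Your frequency sets are built only from the large spectrum of $f$, and you give no reason why $b_\xi$ cannot exceed $a_\xi$ substantially at such $\xi$. If it can, the increment is small through cancellation while $f_{\sml}^{(i)}$ is large. Your fallback (``radii decreasing fast enough'') points at the right parameter but supplies no mechanism.

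\textbf{How the paper handles this step.} The paper avoids monotonicity altogether. It adds to $\Gamma_{i,j+1}$ the large spectra $\{\xi\neq0\colon|\widehat{\mu_{i,\ell}}(\xi)|\geq\delta_{3j+2}\}$ of all earlier kernels $\ell\leq j$; these sets are finite by Plancherel and \eqref{eq:qualitative-kernel-L2}. It then pigeonholes the band energy $\sum_i\sum_{\xi\in\Gamma_{i,j+2}\setminus\Gamma_{i,j}}E_i(\xi)$ rather than an increment of smoothed energies. As a result:
\begin{itemize}
\item off $\Gamma_{i,j+2}\cup\{0\}$, both multipliers are below $\delta_{3j+5}^2$;
\item the band contributes at most $\varepsilon^2/3$;
\item $\Gamma_{i,j}\cup\{0\}$ contributes $O(k\delta_{3j}^2)$.
\end{itemize}

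\textbf{How your route can be repaired.} Since $\widehat{\tau_b(\mu_i*\widetilde\mu_i)}(\xi)=e_p(\xi b)\,b_\xi$, the bound \eqref{eq:qualitative-kernel-translation} gives $|1-e_p(\xi b)|\,b_\xi\ll_{|\Gamma_i^{(m)}|,\sigma_m}\sigma_{m+1}$ for every $b\in B(\Gamma_i^{(m)},\sigma_{m+1})$. This Bohr set contains $\operatorname{supp}\mu_i'$, so $|1-\widehat{\mu_i'}(\xi)|\ll_{|\Gamma_i^{(m)}|,\sigma_m}\sigma_{m+1}/b_\xi$. Hence either $b_\xi<\delta$, or $a_\xi\geq1-O(\sigma_{m+1}/\delta)$. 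Either way $b_\xi^2-a_\xi^2\leq\max\{\delta^2,O(\sigma_{m+1}/\delta)\}$ uniformly in $\xi$, once $\sigma_{m+1}$ is tiny relative to $\delta$, $|\Gamma_i^{(m)}|$ and $\sigma_m$. With this, your telescoping and pigeonhole go through. One of these two arguments has to be supplied.

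\textbf{Smaller bookkeeping points.}
\begin{itemize}
\item The radius cannot be a fixed $\sigma\asymp\varepsilon$. Property (4) needs the fine radius to be $\ll\rho_2$, and the telescoping needs the fine kernel of one stage to be the structured kernel of the next.
\item The order of choices must be: structured radius, then $\rho_1$ (as a minimum of $\eta_1(M_m)^{-1}$ and the translation requirement), then $\rho_2$ and the threshold, then the fine radius. The paper does this with $\delta_{3j},\delta_{3j+1},\delta_{3j+2},\delta_{3j+3}$. As written, your $\sigma_m$ depends on $\rho_2^{(m)}$ while $\rho_1^{(m)}$ must be small relative to $\sigma_m$, which is circular.
\item No factor $2$ arises in (1): differences of $[0,1]$-valued functions already lie in $[-1,1]$.
\end{itemize}
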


\begin{proof}[Proof of Proposition~\ref{prop:directional-regularity}]
We construct nested frequency sets and smooth $f$ in each direction
using symmetrized Bohr kernels. A pigeonhole argument then selects
two consecutive stages for which the sum of the squared
$\mathcal L^2(G)$ norms of the differences between the corresponding
smoothings is small. These smoothings determine the three components.

We first recall that for every $1\leq i\leq k$, fibrewise Parseval gives
\begin{equation}
  \sum_{\xi\in\F_p}
  \E_{\bx\in G}|\widehat f(\bx;v_i;\xi)|^2
  =\|f\|_{\mathcal L^2(G)}^2
  \leq1.
  \label{eq:directional-energy-total}
\end{equation}

We construct the scales and frequency sets inductively, following
\cite{Tao14}. Choose an integer $L\geq2$ sufficiently large in terms
of $k$ and $\varepsilon$ that
$2k/(L-1)\leq\varepsilon^2/3$. Choose a sufficiently small absolute
constant $c_0>0$. We construct positive scales
\[
  \delta_0:=
  \min\left\{\frac14,c_0\frac{\varepsilon}{\sqrt{k}}\right\},
  \qquad
  \delta_0>\delta_1>\cdots>\delta_{3L}>0,
\]
together with nested frequency sets
\[
  \varnothing=\Gamma_{i,0}
  \subseteq\Gamma_{i,1}
  \subseteq\cdots\subseteq\Gamma_{i,L}
  \subseteq\F_p\setminus\{0\}
  \qquad(1\leq i\leq k).
\]
Each scale is chosen as a fixed positive function of the fixed inputs
and the previously chosen scales and cardinality bounds.

Set $R_0:=0$. For $0\leq j<L$, suppose that
$\delta_0,\ldots,\delta_{3j}$, the sets $\Gamma_{i,j}$, and a bound
$R_j$, depending only on $\varepsilon,k,\eta_1,\eta_2$, and $j$,
have been chosen, with
\[
  \max_i|\Gamma_{i,j}|\leq R_j.
\]
Define
\[
  \mu_{i,j}:=\mu_{\Gamma_{i,j},\delta_{3j}},
\]
and choose $\delta_{3j+1}\in(0,\delta_{3j})$ sufficiently
small that Lemma~\ref{lem:bohr-kernel-properties} gives translation error
at most $\varepsilon$ on $B(\Gamma_{i,j},\delta_{3j+1})$ whenever
$|\Gamma_{i,j}|\leq R_j$, and so that
\[
  \delta_{3j+1}
  \leq
  \eta_1(R_j+\varepsilon^{-1})^{-1}.
\]
Next choose positive scales
$\delta_{3j+3}<\delta_{3j+2}<\delta_{3j+1}$ sufficiently small that
\begin{equation}
  \delta_{3j+2}
  \ll
  \min\left\{
    \frac12,
    \eta_2(\delta_{3j+1}^{-1})^{-1}
  \right\}.
  \label{eq:uniformity-scale}
\end{equation}
When $j\geq1$, impose in addition that
\[
  k\delta_{3j+2}^4\leq\frac{\varepsilon^2}{3}.
\]
We include frequencies at which a directional Fourier coefficient
of $f$ has large $\mathcal L^2(G)$ norm, to control the pseudorandom
component. We also include frequencies at which an earlier kernel
has a large Fourier coefficient, to control the difference between
successive smoothings. We therefore set
\begin{align}
  \Gamma_{i,j+1}
  :=\Gamma_{i,j}
  &\cup
  \left\{
    \xi\neq0\colon
    \left(
      \E_{\bx\in G}|\widehat f(\bx;v_i;\xi)|^2
    \right)^{1/2}
    \geq\delta_{3j+2}
  \right\}
  \notag\\
  &\cup
  \bigcup_{\ell=0}^j
  \{\xi\neq0\colon
    |\widehat{\mu_{i,\ell}}(\xi)|
    \geq\delta_{3j+2}\}.
  \label{eq:frequency-selection}
\end{align}
The first new set has cardinality at most $\delta_{3j+2}^{-2}$ by
\eqref{eq:directional-energy-total}. For each $0\leq\ell\leq j$,
Lemma~\ref{lem:bohr-kernel-properties} and Plancherel bound the number
of frequencies satisfying
$|\widehat{\mu_{i,\ell}}(\xi)|\geq\delta_{3j+2}$ by
$O_{R_\ell,\delta_{3\ell}}(\delta_{3j+2}^{-2})$.
We may therefore choose
$R_{j+1}=O_{\varepsilon,k,\eta_1,\eta_2,j}(1)$ which bounds the
cardinalities of all the sets $\Gamma_{i,j+1}$, independently of
$f,p,D$, and the directions. Repeating this procedure constructs
the entire hierarchy. Since there are finitely many stages and
each scale depends only on the fixed inputs and the previously
chosen bounds and scales, all cardinalities and inverse scales
are bounded in terms of the fixed inputs alone.

We now choose the stage of the decomposition. The quantities
\[
  \sum_{i=1}^k
  \sum_{\xi\in\Gamma_{i,j}}
  \E_{\bx\in G}|\widehat f(\bx;v_i;\xi)|^2
\]
are nondecreasing in $j$ and lie in $[0,k]$.
Since the frequency sets are nested, each frequency is counted
at most twice in the sum below. Hence,
\[
  \sum_{j=0}^{L-2}
  \sum_{i=1}^k
  \sum_{\xi\in\Gamma_{i,j+2}\setminus\Gamma_{i,j}}
  \E_{\bx\in G}|\widehat f(\bx;v_i;\xi)|^2
  \leq2k.
\]
The choice of $L$ therefore gives an index $0\leq j\leq L-2$ such that
\begin{equation}
  \sum_{i=1}^k
  \sum_{\xi\in\Gamma_{i,j+2}\setminus\Gamma_{i,j}}
  \E_{\bx\in G}|\widehat f(\bx;v_i;\xi)|^2
  \leq\frac{\varepsilon^2}{3}.
  \label{eq:energy-band}
\end{equation}
The use of $\Gamma_{i,j+2}$ allows us to control both kernels
$\mu_{i,j}$ and $\mu_{i,j+1}$: this set contains every nonzero
frequency at which either kernel has a Fourier coefficient
of magnitude at least $\delta_{3j+5}$.

Fix this index $j$. We define the three components using the
smoothings at stages $j$ and $j+1$:
\begin{align}
  f_{\str}^{(i)}
  &:=T_{\mu_{i,j}*\widetilde\mu_{i,j}}^{v_i}f,
  \notag\\
  f_{\sml}^{(i)}
  &:=T_{\mu_{i,j+1}*\widetilde\mu_{i,j+1}}^{v_i}f
    -T_{\mu_{i,j}*\widetilde\mu_{i,j}}^{v_i}f,
  \notag\\
  f_{\psr}^{(i)}
  &:=f-T_{\mu_{i,j+1}*\widetilde\mu_{i,j+1}}^{v_i}f.
  \label{eq:selected-decomposition}
\end{align}
Since $0\leq f\leq1$ and the kernels are nonnegative and have mean
one, both smoothings in \eqref{eq:selected-decomposition} take
values in $[0,1]$. Hence the structured component takes values
in $[0,1]$, while the other two components take values in $[-1,1]$.
In particular, all three components are $1$-bounded.

\begin{enumerate}
  \item[1.] (\emph{The small component.})
  By \eqref{eq:smoothing-parseval}, we have
\[
  \|f_{\sml}^{(i)}\|_{\mathcal L^2(G)}^2
  =
  \sum_{\xi\in\F_p}
  \E_{\bx\in G}|\widehat f(\bx;v_i;\xi)|^2
  \left(
    |\widehat{\mu_{i,j+1}}(\xi)|^2
    -|\widehat{\mu_{i,j}}(\xi)|^2
  \right)^2.
\]
Split the frequency sum into
\[
  \{0\}\cup\Gamma_{i,j},
  \qquad
  \Gamma_{i,j+2}\setminus\Gamma_{i,j},
  \qquad
  \F_p\setminus(\Gamma_{i,j+2}\cup\{0\}).
\]
On the first range, both multipliers are $1+O(\delta_{3j})$ by
\eqref{eq:qualitative-low-frequency}. On the second range, their
squared difference is at most $1$, since both multipliers lie
in $[0,1]$. On the third range, the selection rule at stage $j+1$
gives
\[
  |\widehat{\mu_{i,j}}(\xi)|,
  |\widehat{\mu_{i,j+1}}(\xi)|
  <\delta_{3j+5},
\]
so the squared difference of the multipliers is at most
$\delta_{3j+5}^4$.
Summing over $i$ and using \eqref{eq:directional-energy-total}
and \eqref{eq:energy-band}, we obtain
\[
  \sum_{i=1}^k
  \|f_{\sml}^{(i)}\|_{\mathcal L^2(G)}^2
  \leq
  Ck\delta_{3j}^2
  +\frac{\varepsilon^2}{3}
  +k\delta_{3j+5}^4
  \leq\varepsilon^2,
\]
where $C$ is an absolute constant. The last inequality follows
from $\delta_{3j}\leq\delta_0$, the choice of $c_0$, and the
condition imposed at stage $j+1$.
This proves \eqref{eq:synchronized-smallness}.

  \item[2.] (\emph{The pseudorandom component.})
  By \eqref{eq:smoothing-multiplier}, we have
\[
  \widehat{f_{\psr}^{(i)}}(\bx;v_i;\xi)
  =
  \bigl(1-|\widehat{\mu_{i,j+1}}(\xi)|^2\bigr)
  \widehat f(\bx;v_i;\xi).
\]
If $\xi\notin\Gamma_{i,j+1}\cup\{0\}$, then we have
\[
  \left(
    \E_{\bx\in G}|\widehat f(\bx;v_i;\xi)|^2
  \right)^{1/2}
  <\delta_{3j+2}
\]
by \eqref{eq:frequency-selection}. If
$\xi\in\Gamma_{i,j+1}\cup\{0\}$, then
\eqref{eq:qualitative-low-frequency} gives
\[
  1-|\widehat{\mu_{i,j+1}}(\xi)|^2
  \ll \delta_{3j+3}.
\]
Since $\delta_{3j+3}<\delta_{3j+2}$,
\eqref{eq:pseudorandom-fourier} holds with
\[
  \rho_2:=C\delta_{3j+2},
\]
where $C$ is a sufficiently large absolute constant. This constant
is absorbed by the choice of scales in \eqref{eq:uniformity-scale}.

  \item[3.] (\emph{The structured component.})
  Set
\[
  \Gamma_i:=\Gamma_{i,j},
  \qquad
  \sigma:=\delta_{3j},
  \qquad
  \rho_1:=\delta_{3j+1},
  \qquad
  \mu_i:=\mu_{i,j}.
\]
The choice of $\delta_{3j+1}$ and
\eqref{eq:smoothing-invariance} give
\eqref{eq:structured-invariance}. Moreover, since
$\max_i|\Gamma_i|\leq R_j$ and $\eta_1$ is nondecreasing,
the choice of $\delta_{3j+1}$ gives
\[
  \rho_1=\delta_{3j+1}
  \leq\frac{1}{\eta_1(R_j+\varepsilon^{-1})}
  \leq\frac{1}{\eta_1(\max_i|\Gamma_i|+\varepsilon^{-1})}.
\]
The choice of $\rho_2$, together with
\eqref{eq:uniformity-scale}, gives
$\rho_2\leq\eta_2(\rho_1^{-1})^{-1}$.
Thus, both scale inequalities in \eqref{eq:regularity-scales} hold.
\end{enumerate}
Finally, since each kernel $\mu_i*\widetilde\mu_i$ has mean one,
we have $\E f_{\str}^{(i)}=\E f$. The bounds on $|\Gamma_i|$,
$\sigma^{-1}$, $\rho_1^{-1}$, and $\rho_2^{-1}$ follow from the
construction. Applying Lemma~\ref{lem:directional-fourier-control}
to the $1$-bounded function $f_{\psr}^{(i)}$ and using
\eqref{eq:pseudorandom-fourier} gives the claimed box-norm estimate.
\end{proof}

\begin{remark}
\label{rem:symmetrization(decomposition)}
In Tao's one-dimensional argument, the structured term is smoothed
with a cutoff $\mu$, whereas the $i$th directional component here uses
$\mu_i*\widetilde\mu_i$.  Replacing $\widehat\mu_i$ by
$|\widehat\mu_i|^2$ changes the regularity estimates only by fixed
absolute factors.  The symmetrized kernel is needed when estimating the structured
contribution, because
$T_{\mu_i*\widetilde\mu_i}^{v_i}
=T_{\mu_i}^{v_i}T_{\widetilde\mu_i}^{v_i}$
allows Lemma~\ref{lem:convolution-inequality} to handle
structured components from different directions.
\end{remark}

\subsection{Polynomial images in prescribed Bohr sets}

We want to find a common parameter $d$ for which every structured
component is almost invariant under its prescribed shift.
The following lemma counts the parameters satisfying
$P_i(d)\in B(\Gamma_i,r_i)$ for all $i$ simultaneously.

\begin{lemma}
\label{lem:polynomial-bohr}
Let $k\in \N$ and $\mathcal{P}=\{P_1,\ldots,P_k\in\Z[t]\}$ be a
collection of linearly independent polynomials with zero constant terms.
For $1\leq i\leq k$, let
$\Gamma_i\subseteq\F_p\setminus\{0\}$ and $0<r_i\leq1/2$.  Then we have
\[
  \E_{d\in\F_p}
  \prod_{i=1}^k\1_{B(\Gamma_i,r_i)}(P_i(d))
  =
  \prod_{i=1}^k\frac{|B(\Gamma_i,r_i)|}{p}
  +O_{\cP,\{|\Gamma_i|\}_{i=1}^k}
  \left(
    p^{-1/2}(\log p)^{\sum_{i=1}^k|\Gamma_i|}
  \right).
\]
\end{lemma}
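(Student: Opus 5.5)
The plan is to expand each Bohr-set indicator in a Fourier series on $\F_p$ and to use Weil's bound for the resulting polynomial exponential sums. For a single frequency set $\Gamma$ and radius $r$, write $\1_{B(\Gamma,r)}(b)=\prod_{\xi\in\Gamma}\1_{[-r,r]}(\|\xi b/p\|_{\T})$. Each factor is a function of $\xi b \bmod p$, namely $b\mapsto\1_{I_r}(\xi b)$ with $I_r:=\{a\in\F_p\colon \|a/p\|_{\T}\leq r\}$ an interval. Expand $\1_{I_r}(a)=\sum_{m\in\F_p}\widehat{\1_{I_r}}(m)e_p(ma)$. The interval bound $|\widehat{\1_{I_r}}(m)|\ll\min\{1,\|m/p\|_{\T}^{-1}/p\}$ gives
\[
  \sum_{m}|\widehat{\1_{I_r}}(m)|\ll\log p.
\]
Taking the product over $\xi\in\Gamma_i$, we get
\[
  \1_{B(\Gamma_i,r_i)}(b)=\sum_{\underline m_i\in\F_p^{\Gamma_i}}c_{i}(\underline m_i)\,e_p\Bigl(b\sum_{\xi\in\Gamma_i}m_{\xi}\xi\Bigr),
\]
with $\sum_{\underline m_i}|c_i(\underline m_i)|\ll(\log p)^{|\Gamma_i|}$.

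Next, substitute $b=P_i(d)$ for each $i$, multiply, and average over $d$. This gives
\[
  \sum_{\underline m_1,\ldots,\underline m_k}\prod_i c_i(\underline m_i)\;\E_{d\in\F_p}e_p\Bigl(\sum_{i=1}^k a_iP_i(d)\Bigr),
  \qquad a_i:=\sum_{\xi\in\Gamma_i}m_{i,\xi}\xi\in\F_p .
\]
We split according to whether $(a_1,\ldots,a_k)=0$ in $\F_p^k$.

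Suppose some $a_i\neq0$. Linear independence of the $P_i$ over $\Q$ with zero constant terms means the coefficient matrix of $P_1,\ldots,P_k$ has a nonzero $k\times k$ minor. For $p$ larger than a constant depending on $\cP$, this minor is nonzero mod $p$, so the $P_i$ remain linearly independent over $\F_p$. Hence $Q:=\sum_i a_iP_i$ is a nonconstant polynomial of degree at most $\max\deg P_i<p$. Weil's bound then gives $|\E_d e_p(Q(d))|\leq(\deg Q-1)p^{-1/2}$. Summing these terms against the coefficient bound yields the error $O_{\cP,\{|\Gamma_i|\}}(p^{-1/2}(\log p)^{\sum|\Gamma_i|})$. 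For small $p$, the claimed estimate is trivial, since both sides are $O(1)$ and the error term is absorbed into the implied constant.

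The diagonal terms, where $a_i=0$ for every $i$, contribute exactly
\[
  \prod_i\sum_{\underline m_i\colon a_i=0}c_i(\underline m_i).
\]
Each factor equals $\E_{b\in\F_p}\1_{B(\Gamma_i,r_i)}(b)=|B(\Gamma_i,r_i)|/p$, because averaging the expansion over $b$ keeps exactly the terms with $a_i=0$. This gives the main term.

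The main technical points are the $\ell^1$ bound on Fourier coefficients of the intervals and the reduction of linear independence mod $p$; Weil's bound itself is quoted. I expect the only delicate step to be the bookkeeping. The vanishing of $a_i$ must be decoupled across different $i$ in the diagonal, which works because the condition $(a_1,\ldots,a_k)=0$ is a product condition over $i$. It also needs checking that $\deg Q\geq1$ whenever the tuple is nonzero, which is where the linear independence of the $P_i$ and their zero constant terms are used.
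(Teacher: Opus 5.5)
Your proposal is correct and is essentially the argument the paper relies on. The paper omits the proof and defers to the analogous lemma in \cite{CDH26}. The error term $p^{-1/2}(\log p)^{\sum_{i}|\Gamma_i|}$ is exactly what your route produces: write each Bohr indicator as a product of interval indicators, each with $\ell^1$ Fourier mass $O(\log p)$; the diagonal $(a_1,\ldots,a_k)=0$ gives the main term, and Weil's bound handles the remaining terms once linear independence of $\cP$ is reduced modulo large $p$. The bookkeeping you flag (the diagonal factorizing over $i$, nonconstancy of $Q$, and small primes being absorbed into the implied constant) is handled correctly.
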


\begin{proof}[Proof of Lemma~\ref{lem:polynomial-bohr}]
The proof is analogous to that of \cite[Lemma~3.4]{CDH26}
and is omitted.
\end{proof}

\subsection{A convolution inequality}

Almost invariance allows us to remove the polynomial shifts from
the structured contribution at the cost of a small error. The remaining
factors are different directional smoothings of the same function.
The following convolution inequality gives the required lower bound
for the average of their product with the original function.

\begin{lemma}
\label{lem:convolution-inequality}
Let $m\geq0$, let $v_1,\ldots,v_m\in \F_p^D$, and let
$f\colon \F_p^D\to[0,\infty)$ satisfy $\E f=\alpha$.  For every
$1\leq i\leq m$, let $\mu_i\colon\F_p\to[0,\infty)$ satisfy
$\E\mu_i=1$.  Then we have
\begin{equation}
  \E_{\bx\in \F_p^D}
  f(\bx)\prod_{i=1}^mT_{\mu_i*\widetilde\mu_i}^{v_i}f(\bx)
  \geq\alpha^{m+1}.
  \label{eq:convolution-inequality}
\end{equation}
\end{lemma}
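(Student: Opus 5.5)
The plan is to deduce \eqref{eq:convolution-inequality} from a Jensen-type entropy argument, treating each direction separately, so that the directions $v_i$ never interact. The case $\alpha=0$ is trivial, since the left-hand side is nonnegative. When $\alpha>0$, we replace $f$ by $f+\delta$ for $\delta>0$ and let $\delta\to0$ at the end. Both sides are continuous in $\delta$, and this reduction makes all logarithms below finite. So assume $f>0$. Then every smoothing $T^{v_i}_{\mu_i}h$ with $h>0$ is also positive, because $\mu_i\geq0$ and $\E\mu_i=1$.

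Consider the probability measure $f(\bx)/\alpha$ on $G=\F_p^D$, and put $g_i:=T^{v_i}_{\mu_i*\widetilde\mu_i}f$. By Jensen's inequality for the exponential (equivalently, weighted AM--GM),
\[
  \E_{\bx} f(\bx)\prod_{i=1}^m g_i(\bx)
  =\alpha\,\E_{f/\alpha}\exp\Bigl(\sum_{i=1}^m\log g_i\Bigr)
  \geq \alpha\exp\Bigl(\sum_{i=1}^m \alpha^{-1}\E_{\bx}f(\bx)\log g_i(\bx)\Bigr).
\]
It therefore suffices to prove, for each single $i$,
\[
  \E_{\bx} f(\bx)\log g_i(\bx)\geq\alpha\log\alpha .
\]

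For this single-direction inequality, use the factorization $T^{v}_{\mu*\widetilde\mu}=T^{v}_{\mu}T^{v}_{\widetilde\mu}$ noted in Remark~\ref{rem:symmetrization(decomposition)}. Write $\mu:=\mu_i$ and $h:=T^{v_i}_{\widetilde\mu}f$, so that $g_i=T^{v_i}_\mu h$; note $h>0$ and $\E h=\alpha$ because $\widetilde\mu\ge0$ has mean one. The argument then has three steps.
\begin{enumerate}
  \item Since $T^{v_i}_\mu h(\bx)=\E_t\mu(t)h(\bx-tv_i)$ is an average against the probability weight $\mu$, concavity of $\log$ gives $\log T^{v_i}_\mu h\geq T^{v_i}_\mu\log h$ pointwise.
  \item A change of variables shows that the adjoint of $T^{v_i}_\mu$ on $\mathcal L^2(G)$ is $T^{v_i}_{\widetilde\mu}$; for real $\mu$, this reads $\E_{\bx} F\,T^{v_i}_\mu H=\E_{\bx} (T^{v_i}_{\widetilde\mu}F)\,H$. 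Combining this with step 1,
  \[
    \E f\log g_i\geq \E f\,T^{v_i}_\mu\log h=\E\,(T^{v_i}_{\widetilde\mu}f)\log h=\E h\log h .
  \]
  \item Convexity of $u\mapsto u\log u$ and $\E h=\alpha$ give $\E h\log h\geq\alpha\log\alpha$.
\end{enumerate}
Inserting this bound into the displayed Jensen estimate yields $\alpha\exp(m\log\alpha)=\alpha^{m+1}$.

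The only delicate point is the adjoint identity in step 2. It is exactly where the symmetrized kernel $\mu*\widetilde\mu$ is needed: it lets the outer weight $f$ be transferred onto $h=T^{v_i}_{\widetilde\mu}f$, which is also the function inside the logarithm. One must also take care with positivity and logarithms, which the $\delta$-regularization handles. The distinct directions cause no difficulty, because the Jensen step reduces everything to one direction at a time.
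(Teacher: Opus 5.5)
Your proposal is correct and follows essentially the same route as the paper. Both arguments use the same ingredients:
\begin{itemize}
\item Jensen for $\log$ against the kernel $\mu_i$;
\item the change of variables that moves $T^{v_i}_{\mu_i}$ onto $f$ and produces $\E h\log h$;
\item convexity of $u\log u$;
\item Jensen with weights $f$ to combine the directions;
\item a $\delta$-regularization at the end.
\end{itemize}
The differences are only cosmetic: you carry $\alpha$ through instead of normalizing to $\E f=1$, and you regularize with $f+\delta$ rather than $(f+\delta)/(1+\delta)$.
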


\begin{proof}[Proof of Lemma~\ref{lem:convolution-inequality}]
The cases $m=0$ and $\alpha=0$ are immediate. By homogeneity,
we may therefore assume that $m\geq1$ and $\E f=1$.
We first treat the case in which $f$ is strictly positive.

We estimate each factor separately after taking logarithms.
Fix $1\leq i\leq m$. Since $\mu_i$ is nonnegative and has mean
one, Jensen's inequality gives
\[
  \log\bigl(T_{\mu_i*\widetilde\mu_i}^{v_i}f(\bx)\bigr)
  =
  \log\bigl(T_{\mu_i}^{v_i}
    T_{\widetilde\mu_i}^{v_i}f(\bx)\bigr)
  \geq
  \Bigl[
    T_{\mu_i}^{v_i}
    \bigl(\log(T_{\widetilde\mu_i}^{v_i}f)\bigr)
  \Bigr](\bx).
\]
Multiplying by $f$ and averaging over $\bx$, we obtain
\begin{align*}
  \E f\log\bigl(T_{\mu_i*\widetilde\mu_i}^{v_i}f\bigr)
  &\geq
  \E f\,T_{\mu_i}^{v_i}
  \bigl(\log(T_{\widetilde\mu_i}^{v_i}f)\bigr)\\
  &=
  \E (T_{\widetilde\mu_i}^{v_i}f)
  \log(T_{\widetilde\mu_i}^{v_i}f),
\end{align*}
where the last equality follows by a change of variables.
The last expression is nonnegative by Jensen's inequality
for the convex function $u\mapsto u\log u$, since
$\E T_{\widetilde\mu_i}^{v_i}f=1$.

We now combine these bounds. Since $f$ is nonnegative and
$\E f=1$, Jensen's inequality with weights $f$ gives
\begin{align*}
  \log\left(
    \E f\prod_{i=1}^mT_{\mu_i*\widetilde\mu_i}^{v_i}f
  \right)
  &\geq
  \E f\log\left(
    \prod_{i=1}^mT_{\mu_i*\widetilde\mu_i}^{v_i}f
  \right)\\
  &=
  \sum_{i=1}^m
  \E f\log\bigl(T_{\mu_i*\widetilde\mu_i}^{v_i}f\bigr)\\
  &\geq0.
\end{align*}
Exponentiating proves the result when $f$ is strictly positive
and has mean one. For a general nonnegative function of mean one,
apply the preceding argument to $(f+\delta)/(1+\delta)$ and let
$\delta\downarrow0$.
This proves \eqref{eq:convolution-inequality}.
\end{proof}

\begin{remark}
\label{rem:symmetrization}
The symmetrization is essential in general: inequality
\eqref{eq:convolution-inequality} need not hold with
$T_{\mu_i}^{v_i}$ in place of
$T_{\mu_i*\widetilde\mu_i}^{v_i}$.
Indeed, take $D=m=1$, $v_1=1$, and set
\[
  f=\1_{\{0\}},
  \qquad
  \mu=\frac p2\bigl(\1_{\{1\}}+\1_{\{-1\}}\bigr).
\]
Then $\mu$ is nonnegative and has mean one, but
\[
  \E_{x\in\F_p} f(x)T_\mu^1f(x)
  =0
  <\frac{1}{p^2}
  =(\E f)^2.
\]
\end{remark}

\subsection{Completion of the proof}

We now combine the preceding results to obtain the required
lower bound for every choice of $\omega$ with the same nonzero
parameter $d$. We first find many parameters satisfying the
Bohr conditions and then remove those belonging to the
exceptional sets.

\begin{proof}[Proof of Theorem~\ref{thm:main}]
Recall that $\alpha=\E_{\bx\in G}\1_A(\bx)$.
We may assume that $0<\varepsilon<1$. Fix
\[
  0<\vartheta\ll_k\varepsilon.
\]
Choose nondecreasing growth functions $\eta_1,\eta_2$, depending only
on $\vartheta,\cP$, and $k$, such that $\eta_1(t)\geq t$ and
$\eta_2(t)^{-1}\ll_{\cP,k}\vartheta^2t^{-kt}$, with the implicit
constant sufficiently small.

Apply Proposition~\ref{prop:directional-regularity} to $\1_A$, with
parameter $\vartheta$ and growth functions $\eta_1,\eta_2$.
We obtain $\Gamma_i$, parameters $\rho_1,\rho_2$, normalized
nonnegative kernels $\mu_i$, and decompositions in the prescribed directions
\[
  \1_A=f_{\str}^{(i)}+f_{\psr}^{(i)}+f_{\sml}^{(i)}
  \qquad(1\leq i\leq k).
\]
In particular, we have
\begin{gather*}
  f_{\str}^{(i)}
  =T_{\mu_i*\widetilde\mu_i}^{v_i}\1_A,
  \qquad
  0\leq f_{\str}^{(i)}\leq1,\\
  \|f_{\psr}^{(i)}\|_{G,\langle v_i\rangle}^{2}
  \leq\rho_2,
  \qquad
  \sum_{i=1}^k\|f_{\sml}^{(i)}\|_{\mathcal L^2(G)}^2
  \leq\vartheta^2,
  \qquad
  \|\tau_b^{v_i}f_{\str}^{(i)}-f_{\str}^{(i)}\|_{\mathcal L^\infty(G)}
  \leq\vartheta
  \quad\bigl(b\in B(\Gamma_i,\rho_1)\bigr).
\end{gather*}
By \eqref{eq:regularity-scales} and the choice of $\eta_1$,
$\max_i|\Gamma_i|\leq\rho_1^{-1}$, so
$\sum_{i=1}^k|\Gamma_i|\leq k\rho_1^{-1}$.
Since $0<\rho_1<1$, the choice of $\eta_2$ therefore gives
\begin{equation}
  \rho_2
  \leq\eta_2(\rho_1^{-1})^{-1}
  \ll_{\cP,k}
  \vartheta^2\rho_1^{k\rho_1^{-1}}
  \leq
  \vartheta^2\rho_1^{\sum_{i=1}^k|\Gamma_i|}.
  \label{eq:rho-hierarchy}
\end{equation}
Proposition~\ref{prop:directional-regularity} also gives bounds for
$\max_i|\Gamma_i|$ and $\rho_1^{-1}$ which depend only on
$\vartheta,\cP$, and $k$, and are independent of $p$, $A$, $D$,
and the directions.

We first collect the parameters for which every structured component
is almost invariant under its prescribed shift. Set
\[
  B^*:=
  \left\{
    d\in\F_p\colon
    P_i(d)\in B(\Gamma_i,\rho_1)
    \text{ for every }1\leq i\leq k
  \right\}.
\]
By \eqref{eq:bohr-size}, we have
\[
  \prod_{i=1}^k\frac{|B(\Gamma_i,\rho_1)|}{p}
  \geq\rho_1^{\sum_{i=1}^k|\Gamma_i|}.
\]
Since the ranks of $B(\Gamma_i,\rho_1)$ and $\rho_1^{-1}$
are bounded independently of $p$,
Lemma~\ref{lem:polynomial-bohr} implies that, for all
sufficiently large primes, we have
\begin{equation}
  \frac{|B^*|}{p}\geq\frac12\rho_1^{\sum_{i=1}^k|\Gamma_i|}.
  \label{eq:Bstar-density}
\end{equation}

We next remove parameters for which a term containing a pseudorandom
component is too large. Terms containing a small component will be
bounded uniformly in $d$ using their $\mathcal L^2(G)$ norms.
We therefore exclude parameters only for terms containing a
pseudorandom component and no small component.
Let $\varnothing\neq S\subseteq\{1,\ldots,k\}$.
For each $i\in S$, let $g_i$ be either $f_{\str}^{(i)}$ or
$f_{\psr}^{(i)}$, and suppose that $g_j=f_{\psr}^{(j)}$
for some $j\in S$.
Applying Lemma~\ref{lem:directional-vn} to $g_j$ gives
\[
  \E_{d\in\F_p}
  \left|
    \E_{\bx\in G}\1_A(\bx)
    \prod_{i\in S}g_i\bigl(\bx+P_i(d)v_i\bigr)
  \right|^2
  \ll_{\cP}\rho_2+p^{-c}.
\]
Markov's inequality yields
\[
  \frac1p
  \left|
    \left\{
      d\colon\left|
        \E_{\bx\in G}\1_A(\bx)
        \prod_{i\in S}g_i\bigl(\bx+P_i(d)v_i\bigr)
      \right|>\vartheta
    \right\}
  \right|
  \ll_{\cP}
  \vartheta^{-2}(\rho_2+p^{-c}).
\]
There are $O_k(1)$ choices of $S$ and of the functions $g_i$.
Let $E$ be the set of parameters for which at least one of
these averages has modulus greater than $\vartheta$.
By the union bound,
$|E|/p\ll_{\cP,k}\vartheta^{-2}(\rho_2+p^{-c})$.
Using \eqref{eq:rho-hierarchy} to control the first term
and taking $p$ sufficiently large to control the second,
we obtain
\begin{equation}
  \frac{|E|}{p}\leq\frac14\rho_1^{\sum_{i=1}^k|\Gamma_i|}.
  \label{eq:exceptional-density}
\end{equation}
Combining \eqref{eq:Bstar-density} and \eqref{eq:exceptional-density}
gives
\[
  \frac{|B^*\setminus E|}{p}
  \geq\frac14\rho_1^{\sum_{i=1}^k|\Gamma_i|}.
\]
The right-hand side is bounded below by a positive constant
depending only on $\vartheta,\cP$, and $k$.
Hence, for all sufficiently large primes, $|B^*\setminus E|>1$,
and we may choose
\begin{equation}
  d\in(B^*\setminus E)\setminus\{0\}.
  \label{eq:chosen-d}
\end{equation}

Having fixed $d$, we verify the required bound for every
choice of $\omega$. Fix $\omega\in\{0,1\}^k$ and put
$S:=\{i\colon\omega_i=1\}$. If $S=\varnothing$, then
\[
  I_{(\cP,\cV)_\omega}(\1_A,\ldots,\1_A)(d)=\E\1_A=\alpha.
\]
This gives the required bound when $S=\varnothing$.
Suppose now that $S\neq\varnothing$.
Leave $\1_A(\bx)$ undecomposed and apply the $i$th directional
decomposition to $\1_A(\bx+P_i(d)v_i)$:
\begin{equation}
  I_{(\cP,\cV)_\omega}(\1_A,\ldots,\1_A)(d)
  =
  \E_{\bx\in G}\1_A(\bx)
  \prod_{i\in S}
  \bigl(f_{\str}^{(i)}+f_{\psr}^{(i)}+f_{\sml}^{(i)}\bigr)
  \bigl(\bx+P_i(d)v_i\bigr).
  \label{eq:final-expansion}
\end{equation}

We first consider the term containing only structured components.
Since $d\in B^*$, we have
$P_i(d)\in B(\Gamma_i,\rho_1)$ for every $i$.
Telescoping the product and using $1$-boundedness gives
\begin{align*}
  \left|
    \E_{\bx\in G}\1_A(\bx)
    \prod_{i\in S}
    f_{\str}^{(i)}\bigl(\bx+P_i(d)v_i\bigr)
    -\E_{\bx\in G}\1_A(\bx)\prod_{i\in S}f_{\str}^{(i)}(\bx)
  \right|
  &\leq
  \sum_{i\in S}
  \|\tau_{P_i(d)}^{v_i}f_{\str}^{(i)}-f_{\str}^{(i)}\|_{\mathcal L^\infty(G)}
  \\
  &\leq |S|\vartheta
  \leq k\vartheta.
\end{align*}
Applying Lemma~\ref{lem:convolution-inequality} to $f=\1_A$
and the kernels $\mu_i$, $i\in S$, gives
\[
  \E_{\bx\in G}\1_A(\bx)
  \prod_{i\in S}f_{\str}^{(i)}(\bx)
  \geq\alpha^{|S|+1}.
\]
Combining these two estimates gives
\begin{equation}
  \E_{\bx\in G}\1_A(\bx)
  \prod_{i\in S}
  f_{\str}^{(i)}\bigl(\bx+P_i(d)v_i\bigr)
  \geq\alpha^{|S|+1}-k\vartheta.
  \label{eq:structured-main-lower-bound}
\end{equation}

Every other term in \eqref{eq:final-expansion} contains a small or a
pseudorandom component. If a term contains $f_{\sml}^{(j)}$, then,
by Cauchy--Schwarz and $1$-boundedness of all other factors,
its modulus is at most
\[
  \|f_{\sml}^{(j)}\|_{\mathcal L^1(G)}
  \leq\|f_{\sml}^{(j)}\|_{\mathcal L^2(G)}
  \leq\vartheta.
\]
If it contains no small component but contains a pseudorandom component,
then it is one of the terms used to define $E$, and its modulus is at most
$\vartheta$ because $d\notin E$.
There are at most $3^{|S|}-1\leq3^k$ terms other than the term
containing only structured components.
Combining these estimates with
\eqref{eq:structured-main-lower-bound}, we obtain
\[
  I_{(\cP,\cV)_\omega}(\1_A,\ldots,\1_A)(d)
  \geq
  \alpha^{|S|+1}-O_k(\vartheta)
  \geq
  \alpha^{|S|+1}-\varepsilon.
\]
The parameter $d$ in \eqref{eq:chosen-d} is independent of $\omega$, so
this holds simultaneously for all $\omega\in\{0,1\}^k$.
\end{proof}

\section*{Acknowledgements}

The author thanks David Conlon and Dingding Dong for helpful
discussions on popular polynomial differences that motivated
the present work.

\section*{Statement of AI use}

The main ideas of this paper, including the directional arithmetic
regularity decomposition in
Proposition~\ref{prop:directional-regularity}, were developed by
the author. After the initial argument was completed, GPT 5.6 Sol
pointed out the need to symmetrize the smoothing kernels defining
the structured components. This modification enables the lower
bound in Lemma~\ref{lem:convolution-inequality}; see also
Remarks~\ref{rem:symmetrization(decomposition)} and \ref{rem:symmetrization}.
GPT-6 Astra was subsequently used to assist with proofreading,
polish the writing, and improve the exposition.

\end{document}